\newtheorem{example}{Example}
\newtheorem{remark}{Remark}
\newtheorem*{problem*}{Problem}
\numberwithin{equation}{section}
\numberwithin{table}{section}
\numberwithin{figure}{section}
\numberwithin{theorem}{section}
\DeclareMathOperator*{\argmin}{argmin}
\newcommand{\nrm}[1]{{\left\vert\kern-0.25ex\left\vert\kern-0.25ex\left\vert #1 
		\right\vert\kern-0.25ex\right\vert\kern-0.25ex\right\vert}}
\def \R{{\mathbb R}}
\def \C{{\mathbb C}}
\def \bbS{\mathbb S}
\def \dA{\Delta A}
\def \dB{\Delta B}
\def \dC{\Delta C}
\def \dP{\Delta P}
\def \dS{\Delta S}
\def \dpS{\Delta^{\!\!\Pi}\! S}
\def \pset{\mathbb U}
\def \ppset{\mathbb U^{\Pi}}
\def\rhom{\rho_{\!\!_{\mathcal M}}}
\begin{document}

\title{Eigenvalue backward errors of Rosenbrock systems
and optimization of sums of Rayleigh quotient
}

\renewcommand{\thefootnote}{\fnsymbol{footnote}}

\author{
Ding Lu\thanks{%
               Department of Mathematics, 
			   University of Kentucky, 
			   Lexington, KY-40506 
			   (\email{ding.lu@uky.edu}).
				D.L. is supported in part by NSF DMS-2110731.
		   }
\and
Anshul Prajapati\thanks{%
				Department of Mathematics, 
				Indian Institute of Technology Delhi, 
				Hauz Khas, 110016, India 
				(\email{maz198078@maths.iitd.ac.in}, 
				\email{punit.sharma@maths.iitd.ac.in}).
				A.P. acknowledges the support of the CSIR Ph.D. grant by Ministry of Science \& Technology, Government of India.
				P.S. acknowledges the support of SERB MATRICS Project (MTR/2022/000362), Government of India.
			}
\and
Punit Sharma\footnotemark[3]
\and
Shreemayee Bora\thanks{%
                Department of Mathematics,
                Indian Institute of Technology Guwahati,
                Guwahati-781039, India
                (\email{shbora@iitg.ernet.in}).
				S.B. is supported by SERB MATRICS Project (MTR/2019/000383), Department of Science and Technology (DST), Government of India.
                }
}

\headers{Eigenvalue backward errors of Rosenbrock systems}{Ding Lu, Anshul Prajapati, Punit Sharma, and Shreemayee Bora}

\maketitle

\begin{abstract}
%
%
We address the problem of computing the eigenvalue backward error
of the Rosenbrock system matrix under various types of block perturbations.
We establish computable formulas for these backward errors 
using a class of minimization problems involving the 
Sum of Two generalized Rayleigh Quotients (SRQ2). 
For computational purposes and analysis, we reformulate such optimization problems 
as minimization of a rational function over the
joint numerical range of three Hermitian matrices. 
This reformulation eliminates certain local minimizers of the
original SRQ2 minimization and allows for 
convenient visualization of the solution.
Furthermore, by exploiting the convexity within the joint numerical range,
we derive a characterization of the optimal solution 
using a Nonlinear Eigenvalue Problem with Eigenvector dependency (NEPv).
The NEPv characterization enables a more efficient solution of the 
SRQ2 minimization compared to traditional optimization techniques. 
Our numerical experiments demonstrate the benefits and effectiveness of 
the NEPv approach for SRQ2 minimization in computing eigenvalue backward
errors of Rosenbrock systems.
\end{abstract}

\begin{keywords}
Rosenbrock system matrix,
eigenvalue backward error, 
joint numerical range,
generalized Rayleigh quotient, 
nonlinear eigenvalue problem
\end{keywords}

\begin{AMS}
15A18, 15A22, 65K05 
\end{AMS}



\section{Introduction}
Consider the {\em Rosenbrock system matrix}
(also known as the Rosenbrock system polynomial) in the standard
form~\cite{MR3477318,MR4742107,MR0325201}:
\begin{equation}\label{eq:rosenbrock}
	S(z)=\begin{bmatrix} A-z I_r & B \\ C & P(z) \end{bmatrix},
\end{equation}
where $z\in\C$, $A\in \C^{r, r}$, $B\in \C^{r, n}, C\in \C^{n, r}$,
$I_r$ is an identity matrix of size $r$, 
and $P(z)$ is a matrix polynomial of degree $d$
given by $P(z)=\sum_{j=0}^d z^jA_j $ with $A_j\in \C^{n, n}$ for $j=0,\ldots,d$.
A scalar $\lambda \in \C$ is called an eigenvalue of $S(z)$ 
if $\det(S(\lambda))=0$. 


%
The Rosenbrock system matrix~\eqref{eq:rosenbrock} and its
associated eigenvalues are of fundamental importance in the
field of linear system theory. 
It is well-known that the dynamical behaviour of 
a linear time-invariant system in the form of 
\begin{equation}\label{eq:ltis}
	\Sigma : \quad
	\left\{
	\begin{array}{rl}
		\dot{x}(t)&=Ax(t)+Bu(t),\\
		y(t)&=Cx(t)+P(\frac{d}{dt})u(t)
	\end{array}\right.
\end{equation} 
can be analyzed through the system matrix $S(z)$ in~\eqref{eq:rosenbrock},
as it contains the information on the {\em poles and zeros} of the
transfer function of $\Sigma$, which determine the stability and performance of the system.
The eigenvalues of $S(z)$ are known as the {\em invariant zeros} of
$\Sigma$ and can be used to recover the zeros of the system.  
For more details, refer to~\cite{MR0325201}.

The Rosenbrock system matrix is also found in solving rational eigenvalue problems,
where for a given a rational matrix function $R(z)\in\C^{n\times n}$, 
we seek scalars $\lambda\in\C$ such that $\det(R(\lambda))=0$.
Rational eigenvalue problems constitute
an important class of nonlinear eigenvalue problems
with numerous applications, including vibration analysis
and solution of (genuine) nonlinear eigenvalue problems; 
see, e.g.,~\cite{MR3031626,Guttel:2017,MR2124762,MR1988722}. 
A popular method for solving rational eigenvalue problems
is the {\em linearization} of $R(z)$
through its {\em state-space realization}, expressed as
\begin{equation}\label{eq:rep}
	R(z)=P(z) - C(A-zI_r)^{-1}B,
\end{equation}
where $A\in \C^{r,r}, B\in \C^{r,n}, C\in \C^{n,r}$, 
and $P(z)$ is matrix polynomial of degree $d$ and size $n$.
This rational linearization technique was introduced by~\cite{MR2811297}
and further developed 
in~\cite{MR3477318,MR3878309,MR3908736,MR3910502,MR4576817,MR4742107}.
The construction and analysis of the linearization rely on 
the fact that, under mild assumptions, 
the eigenvalues of $R(z)$ given by~\eqref{eq:rep} can be recovered 
from the eigenvalues of the Rosenbrock system matrix~\eqref{eq:rosenbrock}.

The main purpose of this paper is to perform an eigenvalue 
backward error analysis for the Rosenbrock system matrix~\eqref{eq:rosenbrock},
taking into account both full and partial perturbations 
across its four component blocks: $A$, $B$, $C$, and $P(z)$.
The eigenvalue backward error is a fundamental tool in both the
theoretical study and practical application of numerical 
eigenvalue problems~\cite{Stewart:1990}. 
As a metric for the accuracy of approximate eigenvalues, 
backward error is commonly 
applied in the stability analysis of eigensolvers. 
Moreover, the backward perturbation matrix 
can also provide useful insights into the sensitivity of eigenvalues to
perturbation in the coefficient matrices.

The eigenvalue backward error for matrix polynomials
have been extensively studied in the literature; 
see, e.g.,~\cite{MR2496422,MR2780396,MR4098788,MR3194659,MR3335496,MR4404572,Tis00}. 
A recurring theme in those studies is the preservation 
of particular structure within the perturbation matrices,
such as symmetry, skew-symmetry, and sparsity.
However, these works have not yet explored the unique block 
structure in the Rosenbrock system matrix $S(z)$ given
by~\eqref{eq:rosenbrock}, despite the prevalence of such matrix polynomials.

Preserving the block structures within $S(z)$ 
for backward error analysis holds practical significance.
For instance, in the linear system~\eqref{eq:ltis}, 
certain coefficient matrices may be exempt from perturbation due to 
system configurations, 
and a block-structured perturbation to the Rosenbrock system $S(z)$ 
can best reflect such constraints.
For rational eigenvalue problems with $R(z)$ given by~\eqref{eq:rep}, 
maintaining the block structure in the perturbation matrix to $S(z)$
allows for the derivation of eigenvalue backward errors 
relating to the coefficient matrices of the rational matrix function $R(z)$,
by exploiting equivalency of the two eigenvalue problems.
Currently, there is limited research on backward error analysis for
rational eigenvalue problems.  
Notably, \cite{MR3568297}~discusses backward errors for eigenpairs,
and a recent study~\cite{MR4709341} addresses backward errors in the
context of perturbation matrix structures, such as symmetric, Hermitian,
alternating, and palindromic.

\subsection{Contribution and outline of the paper}
The main contribution of the paper is two-fold:
(i) to establish computable formulas for the eigenvalue backward errors
of the Rosenbrock system matrix under various types of perturbations 
to the blocks $A,B,C$, and $P(z)$ in $S(z)$, 
and (ii) to develop a novel approach based 
on nonlinear eigenvector problems
for the solution of related optimization problems.

The rest of the paper is organized as follows.
In~\Cref{preliminaries}, we introduce the backward errors
for an approximate eigenvalue $\lambda$ of 
a Rosenbrock system matrix $S(z)$ given by~\eqref{eq:rosenbrock}.
We consider various types of perturbations to the blocks
$A,B,C$, and $P(z)$ of $S(z)$, 
including fully perturbing all four blocks and
partially perturbing only one, two, or three of these blocks. 
We show that the computation of those backward errors 
will lead to a particular type of minimization problem over the unit sphere, 
where the objective function is the Sum of Two generalized Rayleigh
Quotients (SRQ2) of Hermitian matrices. 
Note that optimization problems involving Rayleigh quotients 
are commonly employed for structured backward error analysis 
of eigenvalue problems, 
see, e.g.,~\cite{MR3194659,MR3335496,MR2837582,MR4404572}.

In~\Cref{sec:SRQ2NEPv}, we focus on the SRQ2 minimization problem.
Our development is based on the observation that such problems can be
reformulated as minimization problems over
the joint numerical range (JNR) of three Hermitian matrices. 
This allows us to exploit the inherent convexity in the JNR 
for the solution of the problem. 
Particularly, we establish optimality condition in the form of 
Nonlinear Eigenvalue Problem with eigenvector dependency (NEPv).
This leads to a more efficient and reliable solution of SRQ2 minimization.
Moreover, the convex JNR also allows for convenient visualization of the
optimal solution and verification of global optimality.

In~\Cref{sec:example}, we present numerical examples of SRQ2 minimization, 
intended for computing eigenvalue backward errors of the Rosenbrock system matrix, 
and demonstrate the performance of NEPv approach compared 
to the state-of-the-art optimization technique for SRQ2 minimization. 
We conclude in~\Cref{sec:conclusion}.

\subsection{Notations}%
$\C^{m,n}$ (or $\R^{m,n}$) is the set of $m\times n$ complex (or real) matrices,
with $\C^{m}\equiv \C^{m,1}$ (or $\R^{m}\equiv \R^{m,1}$).
$I_k$ is the $k\times k$ identity matrix.
For a vector or matrix $M\in\C^{m, n}$, 
$M^{T}$ is the transpose, 
$M^{*}$ is the conjugate transpose,
$\|M\|$ is the $2$-norm (i.e, spectral norm),
and $\|M\|_F$ is the Frobenius norm.
We use 
\[\mathbb S^n:= \{x \in \C^n \colon \|x\|=1\}\]
to denote a unit sphere in the $2$-norm in $\C^n$.
For a Hermitian matrix $M$, 
$M\succeq 0$ (or $M\succ 0$) means that it 
is positive semidefinite (or definite).
We use $\lambda_{\min}(M)$ to denote the 
smallest eigenvalue of a Hermitian matrix $M$,
and $\lambda_{\min}(M,N)$ to denote the smallest eigenvalue
of a definite Hermitian matrix pencil $M-\lambda N$.
We will use frequently generalized Rayleigh quotients 
$\rho(x) := \frac{x^*Gx}{x^*Hx}$, with $x\in\C^n\setminus\{0\}$,
for positive semidefinite $G$ and $H\in\C^{n,n}$,
where we assume that
\begin{equation}\label{eq:defrq}
	\rho(x)= \frac{x^*Gx}{x^*Hx} :=0, \quad\text{if $x^*Hx = 0$ and $x^*Gx=0$.}
\end{equation}
By the definition~\eqref{eq:defrq}, the function $\rho(x)$ is  well-defined 
and lower semi-continuous, i.e., $\liminf_{y\to x}\rho(y)\geq \rho(x)$,
for all $x\in\C^n\setminus\{0\}$,
so we can properly define optimization problems 
involving Rayleigh quotients in later discussions.
Since Rayleigh quotients are homogeneous,
i.e., $\rho(x)=\rho(cx)$ for $c\neq 0$,
we can restrict its variable to $x\in\mathbb S^n$.

\section{Backward Error Analysis of Rosenbrock Systems}\label{preliminaries}
We consider eigenvalue backward error of the Rosenbrock
system matrix~\eqref{eq:rosenbrock} subject to various types of
block perturbation in its coefficient matrices. 
The following result is well established in matrix analysis
and will be frequently used in our derivation of computable formulas
for the backward errors.

\begin{lemma}\label{lem:mapping}
	\!Let $x\in \C^n$\!, $b\in \C^m$\!, 
	$\Omega\!=\!\{D\in \C^{m,n} \colon D x\!=\!b\}$, and
	$\smash{\displaystyle \widehat D\!=\!\argmin_{D\in \Omega}\|D\|_F^2}$.

	\begin{enumerate}[(a)]
		\item \label{lem:mapping:ia}
		If $x\neq 0$,  then $\Omega \neq \emptyset$ 
		and the minimizer $\widehat D=bx^{*}/\|x\|^2$.
		\item \label{lem:mapping:ib}
		If $x=0$ and $b=0$, then $\Omega  = \C^{m,n}$ 
		and the minimizer $\widehat D =0$.
		\item \label{lem:mapping:ic}
		If $x=0$ and $b\neq 0$, then $\Omega  = \emptyset$ and
		$\widehat D$ has no solution.
	\end{enumerate}
\end{lemma}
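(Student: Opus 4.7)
The lemma is a standard minimum-norm matrix result, so my plan is to dispose of the two degenerate cases by inspection and focus the actual work on part~\eqref{lem:mapping:ia}.

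For parts \eqref{lem:mapping:ib} and \eqref{lem:mapping:ic}, I would simply observe that for any $D\in\C^{m,n}$, $Dx = D\cdot 0 = 0$ when $x=0$. Hence if $b=0$, every $D$ lies in $\Omega$, so $\Omega=\C^{m,n}$ and $\|D\|_F^2$ is trivially minimized by $\widehat D=0$; and if $b\neq 0$, no $D$ satisfies $Dx=b$, so $\Omega=\emptyset$. No real argument is needed here.

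For part~\eqref{lem:mapping:ia}, my plan is first to verify feasibility by a direct substitution: the candidate $\widehat D=bx^*/\|x\|^2$ satisfies $\widehat D x = b(x^*x)/\|x\|^2 = b$, so $\widehat D\in\Omega$ and in particular $\Omega\neq\emptyset$. Then for optimality I would write any $D\in\Omega$ as $D=\widehat D+E$ with $E:=D-\widehat D$, and note that $Ex = Dx-\widehat Dx = b-b = 0$. Expanding the Frobenius norm via the trace inner product gives
\[
\|D\|_F^2 \;=\; \|\widehat D\|_F^2 \;+\; 2\,\real\tr(\widehat D^*E) \;+\; \|E\|_F^2.
\]
The cross term vanishes because
\[
\tr(\widehat D^*E) \;=\; \tfrac{1}{\|x\|^2}\tr(xb^*E) \;=\; \tfrac{1}{\|x\|^2}\,b^*(Ex) \;=\; 0,
\]
so $\|D\|_F^2 = \|\widehat D\|_F^2 + \|E\|_F^2 \geq \|\widehat D\|_F^2$, with equality iff $E=0$. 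This simultaneously shows that $\widehat D$ is the minimizer and that it is the unique one.

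There is no real obstacle here; the only thing to be a little careful about is the orthogonality step, where one must use the rank-one structure of $\widehat D$ together with the constraint $Ex=0$ to cancel the cross term. An equivalent row-wise argument (treating $\|D\|_F^2=\sum_i\|d_i\|^2$ and solving each scalar linear constraint $d_i^*x=b_i$ by the minimum-norm projection onto $\operatorname{span}\{x\}$) would also work and may be cleaner to present, but the trace-based calculation above is the most compact.
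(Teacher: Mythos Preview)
Your proof is correct. For part~\eqref{lem:mapping:ia} the paper takes a slightly different route: it simply observes that $b=Dx$ gives $\|b\|\leq \|D\|_F\|x\|$, hence $\|D\|_F\geq \|b\|/\|x\|$, and then checks that $\widehat D=bx^*/\|x\|^2$ attains this bound. Uniqueness is argued separately at the outset, from strong convexity of $\|\cdot\|_F^2$ over the affine set $\Omega$. Your orthogonal-decomposition argument is a bit longer but has the advantage of delivering uniqueness for free (equality forces $E=0$), so you do not need the separate convexity remark. Both approaches are entirely standard for this minimum-norm result.
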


\begin{proof}
	It follows from standard analyses in optimization 
	that the minimizer $\widehat D$ is unique,
	as the objective function $\|D\|_F^2$ is strongly convex and 
	$\Omega$ is a convex set; see, e.g.,~\cite{Nocedal:2006}.
	For all $D \in\Omega$, the constraint $b=D x$ leads to 
	$\|b\|\leq \|D\|_F\|x\|$. 
	For item~\eqref{lem:mapping:ia} with $x\neq 0$,
	we have $\|D\|_F\geq \|b\|/\|x\|$,
	where the equality holds for $\widehat D=bx^{*}/\|x\|^2$,
	which satisfies $\widehat Dx=b$.
	Items~\eqref{lem:mapping:ib} and~\eqref{lem:mapping:ic} follow from
	straightforward verification.
\end{proof}

\subsection{Backward Error with Full Perturbation}\label{full}
Let $\dS(z)$ be a perturbation to the Rosenbrock system matrix $S(z)$
in the form of 
\begin{equation}\label{eq:ds}
\dS(z):=\begin{bmatrix}\Delta A & \Delta B\\ \Delta C & \Delta P(z) \end{bmatrix} \;, 
\end{equation}
where 
$\Delta A \in \C^{r,r}, \Delta B \in \C^{r,n}, \Delta C \in \C^{n,r}$,
and $\Delta P(z)=\sum_{j=0}^d z^j \Delta {A_j}$,
with $\Delta {A_j} \in \C^{n,n}$ for $j=0,\ldots, d$,
representing perturbations to the corresponding coefficient matrices of
the Rosenbrock system $S(z)$ in~\eqref{eq:rosenbrock}.
The magnitude of the perturbation $\dS(z)$ is 
naturally measured by the Frobenius norms of its coefficient matrices,
i.e., 
\begin{equation}\label{eq:nrm}
	\nrm{\dS(z)}:=\sqrt{{\|\dA\|}_F^2+{\|\dB\|}_F^2+{\|\dC\|}_F^2+{\|\Delta {A_0}\|}_F^2+\cdots +{\|\Delta {A_d}\|}_F^2}.
\end{equation}
We introduce the following notion of eigenvalue backward error of
Rosenbrock systems.

\begin{definition}\label{def:error}
	The backward error of an approximate eigenvalue $\lambda\in\C$ 
	of the Rosenbrock system matrix $S(z)$ as from~\eqref{eq:rosenbrock},
	subject to the perturbation from the set 
	\begin{equation}\label{eq:bs}
		 \pset
		:= \{\dS(z)\,\colon\, \text{$\dS(z)$ is in the form
		of~\eqref{eq:ds}} \},
	\end{equation}
	is defined as the smallest $\dS(z)\in\pset$ measured by
	$\nrm{\cdot}$ from~\eqref{eq:nrm},
	such that $\lambda$ is an eigenvalue of the perturbed 
	Rosenbrock system matrix $S(z) - \dS(z)$, i.e.,
	\begin{equation}\label{eq:error}
		\eta(\lambda):=\min\left\{ \nrm{\dS(z)}
			: 
		\text{$\dS(z)\in\pset$, $\det(S(\lambda)-\dS(\lambda))=0$}
		\right\}.
	\end{equation}
\end{definition}
%

\begin{remark}\label{rmk:eta}
\rm 
First, if $\lambda$ is an eigenvalue of $S(z)$, 
then the backward error $\eta(\lambda)=0$ vanishes,
since $\dS(z)=0$ is a solution to~\eqref{eq:error}.
Second, for all $\lambda\in \C$,
we have $\eta(\lambda)$ bounded by $\eta(\lambda)\leq \nrm{S(z)}$,
by taking the feasible perturbation $\dS(z)\equiv S(z)$
in~\eqref{eq:error}.
Moreover, the minimal perturbation in~\eqref{eq:error} is always achievable,
i.e., there exists $\dS_{\star}(z)\in\pset$ such that $\eta(\lambda)=\nrm{\dS_{\star}(z)}$. 
This is because ${\det(S(\lambda)-\dS(\lambda))}$ is a polynomial in 
the coefficients of $\dS(z)$.
The set of feasible perturbations $\dS(z)$ of~\eqref{eq:error}, which is the zero set of the polynomial
as given by $\det(S(\lambda)-\dS(\lambda))=0$, must be a closed set. 
Consequently, 
in the minimization~\eqref{eq:error},
the continuous objective function $\nrm{\cdot}$
over a closed set of $\dS(z)$ can always 
achieve its minimal value. 
\end{remark}

Working directly with the constraint on the determinant of a matrix, as
in the formula for backward error in~\eqref{eq:error},
may pose computational challenges.
Instead, we can use the fact that a square matrix $M$ has 
$\det(M)=0$ if and only if $Mx=0$ for some nonzero vector $x$.
We can hence reformulate the optimization problem~\eqref{eq:error} as
\begin{align}
\eta(\lambda)&=\min\left\{ \nrm{\dS(z)} : 
\text{$\dS(z)\in\pset$, 
$x\in\bbS^{r+n}$, 
$(S(\lambda)-\dS(\lambda))x=0$} \right\} \notag \\
					 & = \min_{x\in\bbS^{r+n}} 
					 \bigg(
					 \min \left\{ \nrm{\dS(z)} : 
					 \text{$\dS(z)\in\pset$, $S(\lambda)x=\dS(\lambda)x$} \right\}
					 \bigg), 
					 \label{eq:error2}
\end{align}
where the second equation separates the variables $x$ and $\dS(z)$ 
into a bi-level optimization.
For the inner minimization problem on $\nrm{\dS(z)}$,
we can show that the minimal value can be expressed as the sum of 
two generalized Rayleigh quotients (SRQ2) in $x$.
Consequently, we obtain a characterization of $\eta(\lambda)$ 
through an SRQ2 minimization problem, as shown in the following result.

\begin{theorem}\label{thm:full}
	Let $S(z)$ be a Rosenbrock system matrix given by~\eqref{eq:rosenbrock}, 
	$\lambda \in \C$ with $\det(S(\lambda))\neq 0$,
	and the backward error $\eta(\lambda)$ be defined as in~\eqref{eq:error}.
	We have
	\begin{equation}\label{eq:allbloexp}
		\eta(\lambda)=\min_{x\in \bbS^{r+n}}
		\left(\frac{x^*G_1x}{x^*x}+\frac{x^*G_2x}{x^*(H_1+\gamma H_2)x} \right)^{1/2},
	\end{equation}
	where $\gamma:=\sum_{j=0}^d |\lambda|^{2j}$,
	and $G_i,H_i$ are Hermitian matrices of size $r+n$ given by 
	\begin{subequations} \label{eq:gmat} 
	\begin{align}
		G_1&:=\begin{bmatrix} (A-\lambda I_r) & B
			\end{bmatrix}^*\begin{bmatrix} (A-\lambda I_r) & B
		\end{bmatrix},\\ \label{eq:gg1}
			G_2&:=\begin{bmatrix} C & P(\lambda)\end{bmatrix}^*\begin{bmatrix} C &
		P(\lambda)\end{bmatrix}, \\ \label{eq:gg2}
		H_1&:=\begin{bmatrix}I_r & 0\\ 0& 0\end{bmatrix},\quad \text{and}\quad
		H_2:=\begin{bmatrix}0& 0\\ 0& I_n\end{bmatrix}.
	\end{align}
	\end{subequations}
\end{theorem}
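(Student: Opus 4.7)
The plan is to start from the bi-level formulation already given in~\eqref{eq:error2} and evaluate the inner minimization in closed form for each fixed $x\in\bbS^{r+n}$ by applying Lemma~\ref{lem:mapping} to suitable reshaped perturbation matrices. Partition $x=\begin{bmatrix} x_1\\ x_2\end{bmatrix}$ with $x_1\in\C^r$ and $x_2\in\C^n$, and set
\[
b_1 := (A-\lambda I_r)x_1 + Bx_2,\qquad b_2:=Cx_1+P(\lambda)x_2,
\]
so that the constraint $S(\lambda)x=\dS(\lambda)x$ splits into the two independent block--row equations $\dA x_1+\dB x_2 = b_1$ and $\dC x_1+\dP(\lambda)x_2 = b_2$. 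Since $\nrm{\dS(z)}^2$ from~\eqref{eq:nrm} decomposes as $(\|\dA\|_F^2+\|\dB\|_F^2)+(\|\dC\|_F^2+\sum_{j=0}^d\|\Delta A_j\|_F^2)$, and the two groups of perturbation variables appear in disjoint constraints, the inner optimization splits as the sum of two independent minimization problems.

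For the first group, I would rewrite the constraint as $[\dA\ \dB]\,x = b_1$ and apply Lemma~\ref{lem:mapping}\eqref{lem:mapping:ia} to the matrix variable $D=[\dA\ \dB]$ with data vector $x$ (which is nonzero because $\|x\|=1$). This yields minimum squared Frobenius norm $\|b_1\|^2/\|x\|^2$, which by the definition of $G_1$ in~\eqref{eq:gg1} equals $x^*G_1x/x^*x$.

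The second group is where the polynomial structure enters. The key reshaping is to concatenate $[\dC\ \Delta A_0\ \Delta A_1\ \cdots\ \Delta A_d]$ into a single matrix $M$ and introduce the stacked vector
\[
\widetilde x := \begin{bmatrix} x_1\\ x_2\\ \lambda x_2\\ \vdots\\ \lambda^d x_2\end{bmatrix},
\]
so that the constraint $\dC x_1+\dP(\lambda)x_2=b_2$ becomes the linear equation $M\widetilde x = b_2$, while $\|M\|_F^2=\|\dC\|_F^2+\sum_j\|\Delta A_j\|_F^2$. A direct computation gives $\|\widetilde x\|^2 = \|x_1\|^2+\bigl(\sum_{j=0}^d|\lambda|^{2j}\bigr)\|x_2\|^2 = x^*(H_1+\gamma H_2)x$. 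Since $H_1+\gamma H_2 = \diag(I_r,\gamma I_n)$ with $\gamma\ge 1$, this quantity is strictly positive for $x\in\bbS^{r+n}$, so $\widetilde x\ne 0$ and Lemma~\ref{lem:mapping}\eqref{lem:mapping:ia} applies. This yields minimum squared Frobenius norm $\|b_2\|^2/\|\widetilde x\|^2 = x^*G_2x/\bigl(x^*(H_1+\gamma H_2)x\bigr)$ by the definition of $G_2$ in~\eqref{eq:gg2}.

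Adding the two optimal values and inserting them into~\eqref{eq:error2} gives $\eta(\lambda)^2$ as the minimum of the SRQ2 objective in~\eqref{eq:allbloexp} over $x\in\bbS^{r+n}$; attainment of this outer minimum follows from Remark~\ref{rmk:eta}, combined with compactness of $\bbS^{r+n}$ and the lower semi-continuity of the Rayleigh quotients noted after~\eqref{eq:defrq}. I do not anticipate a serious obstacle: the decomposition of the constraint and objective into two independent subproblems is immediate from the block structure, and the main ``trick'' is simply recognizing that the polynomial action $\dP(\lambda)x_2$ can be linearized by absorbing the powers of $\lambda$ into the stacked vector $\widetilde x$, after which Lemma~\ref{lem:mapping} does all of the work.
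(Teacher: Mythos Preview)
Your proposal is correct and follows essentially the same route as the paper's own proof: split the constraint $S(\lambda)x=\dS(\lambda)x$ into its two block rows, reshape each group of perturbation blocks into a single matrix acting on $x$ (respectively on the stacked vector $\widetilde x$), and apply Lemma~\ref{lem:mapping}\eqref{lem:mapping:ia} to each. One small cross-referencing slip: in the subequations~\eqref{eq:gmat} the label \texttt{eq:gg1} actually points to the $G_2$ line and \texttt{eq:gg2} to the $H_1,H_2$ line, so your citations for $G_1$ and $G_2$ are off by one; just cite~\eqref{eq:gmat} for both.
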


\begin{proof}
	Consider the inner minimization problem of~\eqref{eq:error2}: 
	\begin{equation}\label{eq:inneropt}
		\min \left\{ \nrm{\dS(z)} : \text{$\dS(z)\in\pset$,
		$S(\lambda)x=\dS(\lambda)x$} \right\},
	\end{equation}
	where $x$ is a fixed vector.
	By the block form of coefficient matrices $\dS(z)$ in~\eqref{eq:ds},
	\begin{equation}\label{eq:dssquare}
		\nrm{\dS(z)}^2
		=
		\left\|\begin{bmatrix}\dA & \dB \end{bmatrix}\right\|_F^2
		+ 
		\left \| \begin{bmatrix} \dC & \dA_0 & \cdots & \dA_d
		\end{bmatrix} \right \|_F^2,
	\end{equation}
	and we can also write the constraint $\dS(\lambda)x= S(\lambda)x$ 
	on $\dS(\lambda)$ equivalently to 
	\begin{equation}\label{eq:temp1}
	\begin{bmatrix}\dA & \dB \end{bmatrix} x =
	\begin{bmatrix} (A-\lambda I) & B \end{bmatrix} x 
	\quad \text{and} \quad
	\begin{bmatrix}\dC & \dP(\lambda) \end{bmatrix} x
	=\begin{bmatrix} C & P(\lambda) \end{bmatrix}x.
	\end{equation} 
	Here, the second equation is further equivalent to 
	\begin{equation}\label{eq:allblo3}
		\begin{bmatrix}\dC & \Delta {A_0} & \cdots & \Delta {A_d}\end{bmatrix}
	\begin{bmatrix}x_1^T & x_2^T & \lambda x_2^T & \ldots &\lambda^d x_2^T \end{bmatrix}^T
	=\begin{bmatrix}C & P(\lambda) \end{bmatrix}x,
	\end{equation}
	and we have partitioned $x=\begin{bmatrix} x_1^T&x_2^T\end{bmatrix} ^T$ with  $x_1 \in
	\C^{r}$ and $x_2 \in \C^n$.

	We can minimize the two components of $\nrm{\dS(z)}^2$ in~\eqref{eq:dssquare} separately. 
	First, by~\Cref{lem:mapping},
	among all perturbation $\big[\dA ~ \dB \big]$
	satisfying the first equation of~\eqref{eq:temp1}, 
	\begin{equation}\label{eq:dadb0}
		\begin{bmatrix}\widehat\dA & \widehat\dB \end{bmatrix}
		=\begin{bmatrix} (A-\lambda I_r) & B\end{bmatrix} xx^{*}/\|x\|^2
	\end{equation}
	achieves the minimal Frobenius norm, which is given by
	\begin{equation}\label{eq:dadb}
		\left\|\begin{bmatrix}\widehat\dA & \widehat\dB \end{bmatrix}\right\|_F^2
		=
	\frac{\left\|\begin{bmatrix}(A-\lambda I_r) & B \end{bmatrix} x\right \|^2}{\|x\|^2}=\frac{x^*G_1x}{x^*x},
	\end{equation}
	where $G_1$ is given by~\eqref{eq:gmat}.
	Similarly, among all perturbation 
	$\begin{bmatrix} \dC & \dA_0 & \cdots & \dA_d \end{bmatrix}$
	satisfying~\eqref{eq:allblo3},
	the following matrix achieves the minimal Frobenius norm 
	\begin{equation}\label{eq:dcda0}
	\begin{bmatrix} \widehat\dC & \widehat \dA_0 & \cdots & \widehat \dA_d \end{bmatrix}
	= \begin{bmatrix} C & P(\lambda) \end{bmatrix} x\widetilde
	x^{*}/\|\widetilde x\|^2,
	\end{equation}
	where 
	$\widetilde x = \begin{bmatrix} x_1^T & x_2^T & \lambda x_2^T & \cdots & \lambda^d x_2^T\end{bmatrix}^T$,
	with the Frobenius norm satisfying
	\begin{equation}\label{eq:dcda}
		\left \| 
		\begin{bmatrix} 
			\widehat\dC & \widehat\dA_0 & \cdots & \widehat \dA_d 
		\end{bmatrix} 
		\right \|_F^2
		=
		\frac{\left \|\begin{bmatrix} C & P(\lambda) \end{bmatrix}x\right \|^2}
		{\left \|\widetilde x \right \|^2} 
		=\frac{x^*G_2x}{x^*(H_1+\gamma H_2)x},
	\end{equation}
	where $G_2,H_1$ and $H_2$ are as defined in~\eqref{eq:gmat}, and we used
	$\|\widetilde x\|^2= \|x_1\|^2+\gamma\|x_2\|^2$.

	Therefore, 
	by \eqref{eq:dssquare}, 
	the sum of \eqref{eq:dadb} and~\eqref{eq:dcda}
	gives the minimal perturbation norm $\nrm{\dS(z)}^2$
	for the inner optimization of~\eqref{eq:error2}.
	We thus prove~\eqref{eq:allbloexp}.
\end{proof}

From the proof of~\Cref{thm:full}, 
we can also see that the optimal perturbation $\dS_{\star}(z)$ 
achieving the backward error in~\eqref{eq:error}
can be generated by the perturbation matrices~\eqref{eq:dadb0} 
and~\eqref{eq:dcda0} with $x$ being a minimizer of the 
SRQ2 minimization~\eqref{eq:allbloexp}.

\subsection{Backward Error with Partial Perturbations}\label{sbackerror}
By partial perturbations,  we refer to the cases where only 
some particular blocks within $S(z)$ are perturbed.
Let $\Pi\subset\text{\{$A$, $B$, $C$, $P$\}}$
be a subset indicating the names of blocks to be perturbed. 
A structured perturbation of $S(z)$ corresponding to $\Pi$ 
is denoted by 
\begin{equation}\label{eq:dps}
	\dpS (z) :=
	\begin{bmatrix}
	\Delta A \cdot {\bf 1}_\Pi(A)  & \Delta B \cdot {\bf 1}_\Pi(B)  \\
	\Delta C \cdot {\bf 1}_\Pi(C)   & \Delta P(z) \cdot {\bf 1}_\Pi(P)   
	\end{bmatrix},
\end{equation}
where $\Delta A$, $\Delta B$, $\Delta C$, and $\Delta P(z)$ 
are as from $\dS(z)$ in~\eqref{eq:ds}, 
and ${\bf 1}_\Pi(x)$ is an indicator function,
i.e., ${\bf 1}_\Pi(x)=1$ if $x\in\Pi$, and 
${\bf 1}_\Pi(x)=0$ if $x\not\in\Pi$.
Similar to \Cref{def:error}, we define the 
backward error for $S(z)$ with a partial perturbation $\dpS (z)$
as follows.
\begin{definition}\label{def:perror}
	Let $\Pi\subset\{A, B, C, P\}$ indicate the names of the blocks to
	be perturbed.
	The backward error of an approximate eigenvalue $\lambda\in\C$ 
	of the Rosenbrock system matrix $S(z)$ as in~\eqref{eq:rosenbrock},
	subject to the perturbation from the set
	\begin{equation} \label{eq:bps}
		\ppset:=\{\dpS(z)\colon \text{$\dpS(z)$ is in the form of~\eqref{eq:dps}}\},
	\end{equation}
	is defined as 
	\begin{equation}\label{eq:perror}
		\eta(\lambda; \Pi ):=\min\left\{ \nrm{\dS(z)}
			: 
		\text{$\dS(z)\in\pset^\Pi$, $\det(S(\lambda)-\dS(\lambda))=0$}
		\right\},
	\end{equation}
	where we set $\eta(\lambda; \Pi )=\infty$, if
	the constrained optimization problem is {\em infeasible}, 
	i.e., there is no perturbation $\dS(z)$ within $\ppset$ 
	that satisfies $\det(S(\lambda)-\dS(\lambda))=0$.
\end{definition}
%

\begin{remark}\label{rmk:etap}
\rm
Due to the constraint on block structures of perturbation as in~\eqref{eq:dps},
the optimization problem~\eqref{eq:perror} may be {\em infeasible}.
In such a case, the backward error $\eta(\lambda; \Pi ) =\infty$ is unbounded. 
This is in contrast to the case of full perturbation~\eqref{eq:error}, 
where the backward error is always finite.
On the other hand, provided that the minimization~\eqref{eq:perror} is feasible,
the minimal perturbation $\dS_{\star}(z)\in\ppset$
in~\eqref{eq:perror} can always be attained,
as can be justified by using the same arguments as in~\Cref{rmk:eta}.
\end{remark}

Similar to~\eqref{eq:error2}, it is straightforward to write the optimization problem~\eqref{eq:perror} as 
\begin{align}
\eta(\lambda; \Pi)&=\min\left\{ \nrm{\dS(z)} : 
\text{$\dS(z)\in\ppset$, 
$x\in\bbS^{r+n}$,
$S(\lambda)x=\dS(\lambda)x$} \right\}.
\end{align}
We can then turn it into a 
bi-level optimization problem, by first fixing $x$ and minimizing over
$\nrm{\dS(z)}$, as given by 
\begin{equation} \label{eq:perror2}
\eta(\lambda; \Pi) = \min_{x\in\bbS^{r+n}} \
\varphi(x;\Pi),
\end{equation}
where $\varphi(x;\Pi)$ denotes the solution of the inner optimization
problem, i.e.,
\begin{equation} \label{eq:perror2inner}
\varphi(x;\Pi):=
\min \left\{ \nrm{\dS(z)} : 
\text{$\dS(z)\in\ppset$, $(S(\lambda)-\dS(\lambda))x=0$} \right\},
\end{equation}
and we assign $\varphi(x;\Pi)=\infty$ 
if the minimization problem is infeasible.

Like the case of full perturbation in~\eqref{eq:error},
we can show that 
the solution of the minimization problem~\eqref{eq:perror2inner} still 
admits an expression as the sum of (at most) two generalized Rayleigh
quotients in $x$.
However, the derivation of those solutions will 
depend on the particular block structure 
specified in $\Pi$.
For analyses, we will categorise the perturbations into three classes,
based on the number of blocks in $\Pi$. 
They are
(i) perturbing one single block from $A,B,C$, and $P(z)$ of $S(z)$,
(ii) perturbing any two blocks, 
and 
(iii) perturbing any three blocks.

\subsubsection{Perturbing a Single Block} 
Consider the backward error of $S(z)$ with only one of the blocks
from $A$, $B$, $C$, and $P(z)$ to be perturbed.
Specifically, we refer to $\eta(\lambda; \Pi)$ as defined
by~\eqref{eq:perror} with $\Pi=\{X\}$, for $X\in\{A,B,C,P\}$.
It will be sufficient to focus on the cases of 
$\eta(\lambda; A)$, $\eta(\lambda; B)$,
and $\eta(\lambda; P)$.
Due to symmetry, the backward error $\eta(\lambda; C)$ 
can be derived by the formula of
$\eta(\lambda; B)$ applied to the transposed system $S(z)^T$; see~\Cref{rmk:etac}.

We begin with a general-purpose lemma on partial perturbations
involving a single block before presenting the main result. 

\begin{lemma}\label{lem:1b}
	Consider the minimization problem
	\begin{eqnarray}\label{eq:optb1}
	\begin{aligned}
		\min \quad  \sum_{j=0}^d\|\Delta_{j}\|_F^2 
		\quad \text{s.t.} \quad 
			& \text{$\Delta_{j}\in \C^{r_1,n_1}$, for $j=0,1,\dots,d$,} \\
			& 
			\begin{bmatrix} 
				M_{11} - \sum_{j=0}^d \mu^j \Delta_{j} & M_{12}\\ 
				M_{21} & M_{22} 
			\end{bmatrix} 
	\begin{bmatrix} x_1 \\ x_2 \end{bmatrix} =0,
	\end{aligned}
	\end{eqnarray}
	where $\mu\in \C$,
	$M_{ij} \in \C^{r_i,n_j}$, and 
	$x_j\in\C^{n_j}$, for $i,j\in\{1,2\}$,
	are given.
	We assume $r_1+r_2=n_1+n_2$,
	$M:=\begin{bmatrix}M_{11} & M_{12} \\ M_{21} & M_{22} \end{bmatrix}$
	is non-singular,
	and 
	$x:=\begin{bmatrix}x_1 \\ x_2\end{bmatrix}\neq 0$.
	\begin{enumerate}[(a)]
	\item \label{lem:1b:a}
		The minimization problem~\eqref{eq:optb1} is feasible if and only if $M$ and $x$ satisfy
			\begin{equation}\label{eq:feasible}
			\text{$\begin{bmatrix}M_{21} &M_{22}\end{bmatrix}x=0$
			\quad and\quad $x_1\neq 0$}.
			\end{equation}

	\item \label{lem:1b:b}
		Under conditions~\eqref{eq:feasible}, 
		the minimizer $\{\Delta_{j\star}\}_{j=0}^d$ of~\eqref{eq:optb1}
		satisfies
		\begin{equation}\label{eq:optdlt}
			\sum_{j=0}^d\|\Delta_{j\star}\|_F^2 
			= \frac{1}{\sum_{j=0}^d |\mu|^{2j}}\cdot \frac{\|\begin{bmatrix} M_{11} & M_{12}
				\end{bmatrix} x\|^2}{ \|x_1\|^2}.
		\end{equation}

	\end{enumerate}
\end{lemma}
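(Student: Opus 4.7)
The plan is to split the block constraint into its two row blocks and handle them separately. The bottom row $\begin{bmatrix}M_{21} & M_{22}\end{bmatrix} x = 0$ is independent of the perturbations $\Delta_j$, so it must hold for any feasible instance; this gives one half of the feasibility condition in part~\eqref{lem:1b:a}. The top row rearranges to the single linear constraint
$$\sum_{j=0}^d \mu^j \Delta_j\, x_1 \;=\; \begin{bmatrix}M_{11} & M_{12}\end{bmatrix} x \;=:\; b.$$

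For the remaining half of part~\eqref{lem:1b:a}, I would argue by contradiction using nonsingularity of $M$. If $x_1 = 0$, the displayed top equation reduces to $b = 0$, which combined with $\begin{bmatrix}M_{21} & M_{22}\end{bmatrix} x = 0$ gives $Mx = 0$; since $M$ is nonsingular this forces $x = 0$, contradicting the hypothesis $x \neq 0$. So feasibility requires $x_1 \neq 0$. Conversely, when $\begin{bmatrix}M_{21} & M_{22}\end{bmatrix} x = 0$ and $x_1 \neq 0$, the choice $\Delta_0 = b x_1^*/\|x_1\|^2$ and $\Delta_j = 0$ for $j \geq 1$ is feasible (using $\mu^0 = 1$), so the two conditions together are sufficient.

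For part~\eqref{lem:1b:b}, I would reduce the matrix optimization to a two-stage problem by introducing auxiliary vectors $v_j := \Delta_j x_1 \in \C^{r_1}$. By \Cref{lem:mapping}~\eqref{lem:mapping:ia}, with $x_1 \neq 0$, the minimum of $\|\Delta_j\|_F^2$ over $\Delta_j x_1 = v_j$ equals $\|v_j\|^2/\|x_1\|^2$. Hence
$$\min \sum_{j=0}^d \|\Delta_j\|_F^2 \;=\; \frac{1}{\|x_1\|^2}\, \min\!\left\{ \sum_{j=0}^d \|v_j\|^2 \,:\, \sum_{j=0}^d \mu^j v_j = b \right\}.$$
Next I would stack $V := \begin{bmatrix} v_0 & v_1 & \cdots & v_d \end{bmatrix} \in \C^{r_1, d+1}$ so that $\sum_j \|v_j\|^2 = \|V\|_F^2$ and the inner constraint becomes $V \widetilde\alpha = b$ with $\widetilde\alpha := (1, \mu, \ldots, \mu^d)^T$. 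A second application of \Cref{lem:mapping}~\eqref{lem:mapping:ia} yields the minimum $\|b\|^2/\|\widetilde\alpha\|^2 = \|b\|^2 / \sum_j |\mu|^{2j}$. Substituting back produces the formula~\eqref{eq:optdlt}.

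The main obstacle I anticipate is the feasibility argument in part~\eqref{lem:1b:a}: the $x_1 = 0$ case has to be ruled out via nonsingularity of $M$, rather than via any property of the $\Delta_j$'s, and this nonsingularity hypothesis is the only place that the global structure of $M$ enters. Once that reduction is in place, both minimization steps are direct consequences of \Cref{lem:mapping}.
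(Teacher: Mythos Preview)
Your proposal is correct and follows essentially the same approach as the paper: split the constraint into its two row blocks, use nonsingularity of $M$ to force $x_1\neq 0$, and invoke \Cref{lem:mapping} for the optimal value. The only cosmetic difference is in part~\eqref{lem:1b:b}: the paper stacks $[\Delta_0,\Delta_1,\dots,\Delta_d]$ and applies \Cref{lem:mapping} once to the single equation $[\Delta_0,\dots,\Delta_d]\,[x_1^T,\mu x_1^T,\dots,\mu^d x_1^T]^T=b$, whereas you decouple it into two applications via the auxiliary vectors $v_j$; both yield the same formula.
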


\begin{proof}
	For item~\eqref{lem:1b:a},
	we first reformulate the equality constraints in~\eqref{eq:optb1} as 
	\begin{equation}\label{eq:upper}
		\left\{
		\begin{aligned}
		\begin{bmatrix}
			\Delta_{0}& \Delta_{1}&\cdots & \Delta_{d}
		\end{bmatrix}
		\begin{bmatrix}x_1^T&\mu x_1^T&\cdots &\mu^d x_1^T\end{bmatrix}^T
		& =
		\begin{bmatrix}M_{11}&M_{12} \end{bmatrix} x,\\
		0 & = \begin{bmatrix}M_{21} &M_{22}\end{bmatrix}x.
		\end{aligned}\right.
	\end{equation}
	For feasibility, 
	we will need $[M_{21},M_{22}]x = 0$.
	This condition also implies that $[M_{11},M_{12}]x\neq 0$, because 
	$Mx\neq 0$ (recall that $M$ is non-singular and $x\neq 0$).
	Since the first equation in~\eqref{eq:upper} has a non-zero right
	hand side, by~\Cref{lem:mapping},
	there exist solution $\{\Delta_j\}$ if and only if $x_1\neq 0$.
	This proves the conditions in~\eqref{eq:bs}. 

	Item~\eqref{lem:1b:b} follows directly from the first equation
	of~\eqref{eq:upper} and~\Cref{lem:mapping}.
\end{proof}

\begin{theorem}\label{thm:1b}
Let $S(z)$ be a Rosenbrock system matrix given by~\eqref{eq:rosenbrock},
$\lambda\in\C$ be a scalar with $\det(S(\lambda))\neq 0$,
$G_1,G_2,H_1,H_2$ be Hermitian matrices from~\eqref{eq:gmat},
and $U\in \C^{r+n,p}$ and $V\in \C^{r+n,k}$ 
form orthonormal basis of the null spaces
of $G_1$ and $G_2$, respectively.
Then, it holds that 
$V^*G_1V\succ 0$ and $U^*G_2U\succ 0$,
and we have the following characterizations 
for the eigenvalue backward errors. 
\begin{enumerate}[(a)]
	\item \label{thm:1b:a}
		$\eta(\lambda;A)=\infty$ if and only if $V^*H_1V=0$;
		Otherwise, 
	\begin{equation}\label{eq:perA}
	\left(\eta(\lambda;A)\right)^2
	= 
	\min_{y \in \bbS^{k}}
	\left(\frac{y^*V^*G_1Vy}{y^*V^*H_1Vy}\right) 
	\equiv \lambda_{\min}\left(V^*G_1V,\, V^*H_1V\right).
	\end{equation}

	\item \label{thm:1b:b}
		$\eta(\lambda;B)=\infty$ if and only if $V^*H_2V=0$;
		Otherwise, 
		\begin{equation}\label{eq:perB}
		\left(\eta(\lambda;B)\right)^2
		=
		\min_{y \in \bbS^{k}}
		\left(\frac{y^*V^*G_1Vy}{y^*V^*H_2Vy}\right)
		\equiv
		\lambda_{\min}(V^*G_1V,\, V^*H_2V).
	\end{equation}

	\item \label{thm:1b:p}
		$\eta(\lambda;P)=\infty$ if and only if $ U^*H_2U = 0$; 
		Otherwise, with $\gamma\equiv \sum_{j=0}^d |\lambda|^{2j}$, 
		\begin{equation}\label{eq:perP}
		\left(\eta(\lambda;P)\right)^2
		= 
		\frac{1}{\gamma} \cdot
		\min_{y \in \bbS^{p}}
		\left(\frac{y^*U^*G_2Uy}{y^*U^*H_2Uy}\right)
		\equiv 
		\frac{1}{\gamma} \cdot \lambda_{\min}(U^*G_2U,\, U^*H_2U).
	\end{equation}
\end{enumerate}
\end{theorem}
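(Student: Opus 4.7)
The plan is to prove each part by applying~\Cref{lem:1b} (after an appropriate reindexing of the block structure) to evaluate the inner minimization $\varphi(x;\Pi)$ from~\eqref{eq:perror2inner}, and then handle the outer minimization over $x \in \bbS^{r+n}$ by restricting to the null space of $G_1$ or $G_2$. As a preliminary, I would verify $V^*G_1V \succ 0$ and $U^*G_2U \succ 0$: otherwise some $y \neq 0$ would satisfy $G_1 V y = 0$, and combined with $G_2 V y = 0$ (since the columns of $V$ span $\mathrm{null}(G_2)$), one would get $S(\lambda) V y = 0$ with $V y \neq 0$, contradicting $\det(S(\lambda))\neq 0$. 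The argument for $U^*G_2U$ is symmetric.

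For part~(a), with $\Pi=\{A\}$, the perturbation $\dpS(\lambda)$ reduces to its $(1,1)$-block $\Delta A$, and $(S(\lambda)-\dpS(\lambda))x=0$ reads
\[
\Delta A\,x_1 = \begin{bmatrix} A-\lambda I_r & B \end{bmatrix}x, \qquad \begin{bmatrix} C & P(\lambda) \end{bmatrix}x = 0.
\]
Applying~\Cref{lem:1b} with $d=0$, $M_{11}=A-\lambda I_r$, $M_{12}=B$, $M_{21}=C$, $M_{22}=P(\lambda)$, the inner problem is feasible iff $G_2x=0$ (i.e., $x=Vy$ for some $y\in\bbS^k$) and $x^*H_1x>0$. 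Thus the outer minimization admits a feasible point iff $V^*H_1V\neq 0$, and in that case
\[
\varphi(Vy;A)^2 = \frac{\|[A-\lambda I_r,\,B]Vy\|^2}{\|(Vy)_1\|^2} = \frac{y^*V^*G_1Vy}{y^*V^*H_1Vy},
\]
whose minimum (via convention~\eqref{eq:defrq}) equals $\lambda_{\min}(V^*G_1V,\,V^*H_1V)$; definiteness of the pencil follows from $V^*G_1V\succ 0$.

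Parts~(b) and~(c) follow by permuting blocks so the perturbed block occupies the $(1,1)$ position of~\Cref{lem:1b}. For~(b), a column swap interchanges the roles of $x_1$ and $x_2$ and yields $M_{11}=B$, $M_{12}=A-\lambda I_r$, $M_{21}=P(\lambda)$, $M_{22}=C$, $d=0$; feasibility becomes $G_2x=0$ and $x^*H_2x>0$, equivalent to $V^*H_2V\neq 0$, producing~\eqref{eq:perB}. For~(c), both a row and a column swap move $\Delta P(\lambda)=\sum_j\lambda^j\Delta A_j$ to the $(1,1)$ block with $\mu=\lambda$ and degree $d$; the factor $\sum_j|\mu|^{2j}=\gamma$ from~\eqref{eq:optdlt} supplies the prefactor $1/\gamma$ in~\eqref{eq:perP}, and feasibility collapses to $G_1x=0$ and $x^*H_2x>0$, i.e., $U^*H_2U\neq 0$.

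The main obstacle I anticipate is the bookkeeping of the permutations in parts~(b) and~(c): each case requires correctly identifying which sub-vector the perturbed block acts on, translating this into~\Cref{lem:1b}'s $x_1\neq 0$ condition, and recognizing that \emph{global} infeasibility of the outer problem corresponds to $x^*H_jx$ vanishing on the entire restricted subspace, i.e., $V^*H_jV=0$ or $U^*H_jU=0$. A secondary subtlety is identifying the minimum Rayleigh quotient with the smallest eigenvalue of a pencil whose denominator matrix is only positive semidefinite; the strict definiteness of $V^*G_1V$ or $U^*G_2U$ established at the outset guarantees that the pencil is definite, so standard generalized-eigenvalue theory applies.
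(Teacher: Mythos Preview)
Your proposal is correct and follows essentially the same route as the paper: establish $V^*G_1V\succ 0$ and $U^*G_2U\succ 0$ from $G_1+G_2=S(\lambda)^*S(\lambda)\succ 0$, then for each part apply \Cref{lem:1b} (with the appropriate block permutations in~(b) and~(c)) to resolve the inner minimization~\eqref{eq:perror2inner}, and finally restrict the outer minimization to $\mathrm{null}(G_2)$ or $\mathrm{null}(G_1)$. One small remark: your invocation of convention~\eqref{eq:defrq} in part~(a) is not quite the right justification, since $V^*G_1V\succ 0$ forces the numerator to be strictly positive whenever $y\neq 0$, so the $0/0$ case never arises; rather, when $y^*V^*H_1Vy=0$ the quotient is $+\infty$, which is precisely how such $y$ are excluded from the minimum (as the paper also notes).
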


\begin{proof}
	To begin with, we have 
	$G_1+G_2\equiv S(\lambda)^*S(\lambda)\succ 0$,
	due to $\det(S(\lambda))\neq 0$.
	Then a quick verification shows
	$V^*G_1V\succ 0$ and $U^*G_2U\succ 0$.

	For item~\eqref{thm:1b:a}, it follows from~\eqref{eq:perror2} with
	$\Pi=\{A\}$ that 
	\begin{equation}\label{eq:etaa}
		\eta(\lambda;A)=\min_{x\in\bbS^{r+n}}
		\left( \min\left\{ \|\dA\|_F \colon 
		\begin{bmatrix} (A-\lambda I_r)-\dA & B \\ C &
		P(\lambda)\end{bmatrix} x = 0
	\right\}\right).
	\end{equation}
	By~\Cref{lem:1b}, with $\mu=0$ and $M=S(\lambda)$, 
	the inner optimization above has solution 
	\begin{equation}\label{eq:das2}
		\|\dA_{\star}\|_F^2 = \frac{\|\begin{bmatrix} A-\lambda I_r & B
		\end{bmatrix} x\|^2}{\|x(1:r)\|^2} 
		= \frac{x^*{G_1}x}{x^*{H_1}x},
	\end{equation}
	where $G_1$ and $H_1$ are as defined in~\eqref{eq:gmat},
	and we assumed $x$ satisfies 
	\begin{equation}\label{eq:feas1ba}
		x(1:r)\neq 0 \quad \text{and} \quad
		\begin{bmatrix} C & P(\lambda) \end{bmatrix}x= 0,
	\end{equation}
	as according to the feasibility condition~\eqref{eq:ds}
	(otherwise $\|\dA_{\star}\|_F^2=\infty$).
	The second constraint, $[C, P(\lambda)]x=0$, implies we can parameterize $x$ as 
	\begin{equation}\label{eq:parx}
		x = Vy \quad\text{for some $y\in\bbS^k$},
	\end{equation}
	where $V$ is a basis matrix of the null space of
	$[ C, P(\lambda)]$ (and of $G_2$~\eqref{eq:gmat} as well).
	Consequently, plugging~\eqref{eq:parx} into~\eqref{eq:das2},
	and further into~\eqref{eq:etaa}, we obtain
	\[
		\left(\eta(\lambda; A)\right)^2 =
		\min
		\left\{\frac{y^*V^*{G_1}Vy}{y^*V^*{H_1}Vy}
		\; :\; y \in \bbS^k,\, y^*V^*{H_1}Vy \neq 0 \right\},
	\]
	where $y^*V^*{H_1}Vy \neq 0$ 
	is due to $x^*H_1x\equiv \|x(1:r)\|^2 \neq 0$
	from~\eqref{eq:feas1ba}.
	Clearly, this optimization is feasible if and only if
	$V^*{H_1}V\neq 0$. 
	We thus proved the first equality in~\eqref{eq:perA},
	upon noticing that the Rayleigh quotient 
	yields $+\infty$ for $y$ satisfying $y^*(V^*{H_1}V)y =0$,
	because of $V^*{G_1}V \succ 0$. 
	The second equality is by the well-known 
	minimization principle of 
	Hermitian eigenvalue problems; 
	see, e.g.,~\cite[Chap.~VI]{Stewart:1990}.

	For item~\eqref{thm:1b:b}, it follows from~\eqref{eq:perror2} with
	$\Pi=\{B\}$ that 
	\begin{align*}
		\eta(\lambda;B)
		&=\min_{x\in\bbS^{r+n}}
		\left( \min\left\{ \|\dB\|_F \colon 
			\begin{bmatrix} (A-\lambda I_r)  & B +\dB\\ C & P(\lambda) \end{bmatrix}
			\begin{bmatrix}x_1 \\ x_2 \end{bmatrix}  = 0
		\right\}\right)\\
		&=\min_{x\in\bbS^{r+n}}
		\left( \min\left\{ \|\dB\|_F \colon 
				\begin{bmatrix}  B +\dB & (A-\lambda I_r) \\ P(\lambda)
				& C\end{bmatrix} \begin{bmatrix}x_2 \\ x_1 \end{bmatrix} = 0
		\right\}\right),
	\end{align*}
	where in the second equation, we have moved $\dB$ to the $(1,1)$-block by switching the 
	column blocks in the linear constraints.
	The rest of the proof is analogous to
	item~\eqref{thm:1b:a}.

	For item~\eqref{thm:1b:p}, it follows from~\eqref{eq:perror2} with $\Pi=\{P\}$ that
	\begin{align*}
		\eta(\lambda;P)
		&=\min_{x\in\bbS^{r+n}}
		\left(\!\! \min\left\{ \sqrt{\sum_{j=0}^d\|\dA_j\|_F^2} \colon \!
			\begin{bmatrix} A-\lambda I_r  & B \\ C & P(\lambda) +
			\sum\limits_{j=0}^d \lambda^j\dA_j\end{bmatrix} 
			\!\!\begin{bmatrix}x_1 \\ x_2 \end{bmatrix}\! =\! 0
		\right\}\!\!\right)\\
		&=\min_{x\in\bbS^{r+n}}
		\left(\!\! \min\left\{ \sqrt{\sum_{j=0}^d\|\dA_j\|_F^2} \colon \!
			\begin{bmatrix} P(\lambda) + \sum\limits_{j=0}^d \lambda^j \dA_j& C\\ 
				B & A-\lambda I_r \end{bmatrix} 
			\!\!\begin{bmatrix}x_2 \\ x_1 \end{bmatrix}\! =\! 0
		\right\}\!\!\right),
	\end{align*}
	where in the second equation we have switched the row and column
	blocks in the linear constraint. 
	The rest of the proof follows item~\eqref{thm:1b:a},
	using~\Cref{lem:1b} with $\mu=\lambda$.
\end{proof}

\begin{remark}\label{rmk:etac}
\rm
To obtain the backward error $\eta(\lambda; C)$ of the Rosenbrock
system matrix $S(z)$, we can consider the transposed system
matrix $(S(z))^T$,
as denoted by
\begin{equation}\label{eq:hats}
	\widehat S(z):=
	\begin{bmatrix} \widehat A-z I_r & \widehat B \\ \widehat C & \sum_{j=0}^d
	z^j\widehat A_j\end{bmatrix}
	\equiv 
	\begin{bmatrix} A^T-z I_r & C^T \\ B^T & \sum_{j=0}^d
	z^jA_j^T\end{bmatrix}.
\end{equation}
Then perturbing the $C$ block of $S(z)$ by $\dC$ is 
the same as perturbing the $\widehat B$ block of 
$\widehat S(z)$ by $\dC^T$.
Since ${\|\dC^T\|}_F\equiv {\|\dC\|}_F$,
we have $\eta(\lambda;C) =\widehat\eta(\lambda; \widehat B)$, 
where $\widehat\eta(\lambda; \widehat B)$ is the eigenvalue backward error 
of the new system $\widehat S(z)$ subject to a perturbation to 
its $\widehat B$ block.
Here, we can evaluate $\widehat\eta(\lambda; \widehat B)$
through~\Cref{thm:1b}~\eqref{thm:1b:b} applied to $\widehat S(z)$.
\end{remark}

\subsubsection{Perturbing Any Two Blocks}
We now consider the cases where exactly two blocks of $S(z)$
are to be perturbed, 
i.e., $\eta(\lambda;\Pi)$ with $\Pi=\{X_1,X_2\} \subset \{A,B,C,P\}$.
We have six such cases in total,
and it is sufficient to focus on the four cases
with $\Pi=\{A,B\}$, $\{A,P\}$, $\{B,C\}$, and $\{C,P\}$.
The other two, 
with $\Pi=\{A,C\}$ and $\{B,P\}$, 
can be addressed using symmetry 
by considering the transposed system $(S(z))^T$,
as similar to~\Cref{rmk:etac}.
Again, we start by discussing a general-purpose lemma 
before establishing the formulas for the backward errors.

\begin{lemma}\label{lem:2b}
	Let $\mu\in \C$,
	$M_{ij} \in \C^{r_i,n_j}$, and 
	$x_j\in\C^{n_j}$, for $i,j\in\{1,2\}$, where $r_1+r_2=n_1+n_2$.
	Suppose that $x=[x_1^T,x_2^T]^T\neq 0$.

	\begin{enumerate}[(a)]
		
	\item \label{lem:2b:a}
	The minimization problem 
	\begin{equation}\label{eq:optb2a}
	\begin{aligned}
		\min \quad & \|\Delta_{11}\|_F^2+\sum_{j=0}^d\|\Delta_{j}\|_F^2\\
	\text{s.t.} \quad 
			   & \text{$\Delta_{11}\in \C^{r_1,n_1}$, $\Delta_{j}\in \C^{r_1,n_2}$, for $j=0,1,\dots,d$,} \\
				&
		\begin{bmatrix} 
	M_{11} - \Delta_{11} & M_{12}- \sum_{j=0}^d \mu^j \Delta_{j}\\
	M_{21} & M_{22} 
	\end{bmatrix} 
	\begin{bmatrix} x_1 \\ x_2 \end{bmatrix} =0
	\end{aligned}
	\end{equation}
	is feasible if and only if 
	$\begin{bmatrix} M_{21} & M_{22} \end{bmatrix} x=0$,
	where the minimal objective value 
	\begin{equation}\label{eq:optdlt2}
		\|\Delta_{11\star}\|_F^2 +
		\sum_{j=0}^d\|\Delta_{j\star}\|_F^2 
		= \frac{\|\begin{bmatrix} M_{11} & M_{12} \end{bmatrix}x \|^2}
		{\|x_1\|^2+(\sum_{j=0}^d |\mu|^{2j})\cdot \|x_2\|^2}.
	\end{equation}

	\item \label{lem:2b:b}
	The minimization problem 
	\begin{equation}\label{eq:optb2b}
	\begin{aligned}
		\min \quad & \|\Delta_{11}\|_F^2+\sum_{j=0}^d\|\Delta_{j}\|_F^2\\
		\text{s.t.} \quad 
			   	& \text{$\Delta_{11}\in \C^{r_1,n_1}$, $\Delta_{j}\in \C^{r_2,n_2}$, for $j=0,1,\dots,d$,} \\
				& \begin{bmatrix} 
					M_{11} - \Delta_{11} & M_{12}\\ 
					M_{21} & M_{22} - \sum_{j=0}^d \mu^j \Delta_{j}\\ 
				\end{bmatrix} 
				\begin{bmatrix} x_1 \\ x_2 \end{bmatrix} =0 
	\end{aligned}
	\end{equation} 
	has the minimal objective value given by 
	\begin{equation}\label{eq:optdlt2b}
		\|\Delta_{11\star}\|_F^2 +
	\sum_{j=0}^d\|\Delta_{j\star}\|_F^2
	= \frac{\|\begin{bmatrix} M_{11} & M_{12} \end{bmatrix} x\|^2}{\|x_1\|^2} 
		+\frac{\|\begin{bmatrix} M_{21} & M_{22} \end{bmatrix}
		x\|^2}{
		(\sum_{j=0}^d |\mu|^{2j})\cdot\|x_2\|^2},
	\end{equation}
	where we assign a value of $0$ to a fraction in the form of $\{\frac{0}{0}\}$.
	Here, the objective value~\eqref{eq:optdlt2b} is $\infty$, 
	i.e., the optimization~\eqref{eq:optb2b} is infeasible,
	if and only if either (i)~$x_1=0$ and $M_{12}x_2\neq 0$, or (ii)
	$x_2=0$ and $M_{21}x_1\neq 0$.
\end{enumerate}

\end{lemma}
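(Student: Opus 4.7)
The plan is to handle both parts by the same strategy used in the proof of \Cref{lem:1b}: split the block equation into its two row-blocks, rewrite each row as a linear mapping constraint on suitably stacked perturbation and state vectors, and invoke \Cref{lem:mapping} to compute the Frobenius-optimal solution.

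For part~\eqref{lem:2b:a}, all perturbations $\Delta_{11}$ and $\{\Delta_j\}_{j=0}^d$ reside in the top block-row, so the bottom row yields the perturbation-free condition $[M_{21},\, M_{22}]\,x = 0$, which is necessary and sufficient for feasibility. The top row rearranges into the single mapping equation
\[
\begin{bmatrix}\Delta_{11} & \Delta_0 & \Delta_1 & \cdots & \Delta_d\end{bmatrix}
\begin{bmatrix} x_1 \\ x_2 \\ \mu x_2 \\ \vdots \\ \mu^d x_2 \end{bmatrix}
= \begin{bmatrix} M_{11} & M_{12}\end{bmatrix} x.
\]
Because $x\neq 0$ the stacked vector on the left is nonzero, so \Cref{lem:mapping}\eqref{lem:mapping:ia} applies and the optimal squared Frobenius norm equals the squared norm of $[M_{11},M_{12}]\,x$ divided by $\|x_1\|^2+\bigl(\sum_{j=0}^d |\mu|^{2j}\bigr)\,\|x_2\|^2$, which gives~\eqref{eq:optdlt2}.

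For part~\eqref{lem:2b:b}, the decisive observation is that $\Delta_{11}$ appears exclusively in the top block-row while $\{\Delta_j\}$ appears exclusively in the bottom one, so the block constraint uncouples into two independent mapping equations
\[
\Delta_{11}\, x_1 = \begin{bmatrix}M_{11} & M_{12}\end{bmatrix}\, x,
\qquad
\begin{bmatrix}\Delta_0 & \cdots & \Delta_d\end{bmatrix}
\begin{bmatrix} x_2 \\ \mu x_2 \\ \vdots \\ \mu^d x_2 \end{bmatrix}
= \begin{bmatrix}M_{21} & M_{22}\end{bmatrix}\, x,
\]
and the additive objective splits along the same partition, so the two subproblems can be minimized independently. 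Applying \Cref{lem:mapping} to each in turn produces the two fractions in~\eqref{eq:optdlt2b}. A subproblem becomes infeasible exactly when its left-hand vector vanishes while its right-hand side does not, which reduces to the two cases~(i) and~(ii); in the complementary degenerate subcase (both sides zero), \Cref{lem:mapping}\eqref{lem:mapping:ib} gives the minimizer $0$, consistent with the convention $\{0/0\}=0$.

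The main point requiring care is this last bookkeeping step for part~\eqref{lem:2b:b}: one has to separate the feasible-but-degenerate regime from the genuinely infeasible regime so that the fractional expression~\eqref{eq:optdlt2b} uniformly encodes both, with the convention on $\{0/0\}$ ensuring consistency of the formula across all edge cases.
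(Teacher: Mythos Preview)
Your proposal is correct and follows essentially the same approach as the paper: split the block constraint into its two row-blocks, rewrite each as a mapping equation on stacked perturbation and state vectors, and apply \Cref{lem:mapping}, with the same case analysis for the degenerate and infeasible regimes in part~\eqref{lem:2b:b}.
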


\begin{proof}
	For item~\eqref{lem:2b:a},
	we can reformulate the constraints for $\Delta$'s in~\eqref{eq:optb2a} as 
	\begin{equation}\label{eq:upperb2a}
		\left\{
		\begin{aligned}
		\begin{bmatrix}
			\Delta_{11}\!\!&\Delta_{0}& \Delta_{1}&\cdots & \Delta_{d}
		\end{bmatrix}
		\begin{bmatrix}x_1^T\!\!&x_2^T&\mu x_2^T&\cdots &\mu^d x_2^T\end{bmatrix}^T
		& =
		\begin{bmatrix}M_{11}&M_{12} \end{bmatrix} x,\\
		0 & = \begin{bmatrix}M_{21} &M_{22}\end{bmatrix}x.
		\end{aligned}\right.
	\end{equation}
	In the first equation,
	because of $\begin{bmatrix}x_1^T&x_2^T&\cdots &\mu^d x_2^T\end{bmatrix}\neq 0$
	(due to $x=[x_1^T,x_2^T]\neq 0$),
	there always exists a solution 
	$[\Delta_{11},\Delta_{0}, \dots, \Delta_{d}]$
	according to \Cref{lem:mapping}.
	Moreover, the solution 
	$[\Delta_{11\star},\Delta_{0\star}, \dots, \Delta_{d\star}]$
	with the minimal Frobenius norm satisfies 
	\[ 
		\|[\Delta_{11\star},\Delta_{0\star},\dots,\Delta_{d\star}]\|_F=
	\|\begin{bmatrix}M_{11}&M_{12} \end{bmatrix} x\|/\|
	\begin{bmatrix}x_1^T&x_2^T&\mu x_2^T&\cdots &\mu^d
	x_2^T\end{bmatrix}\|.
	\]
	This leads to the optimal objective value~\eqref{eq:optdlt2}
	for the optimization~\eqref{eq:optb2a}, 
	provided that 
	$\begin{bmatrix}M_{21} &M_{22}\end{bmatrix}x=0$, 
	which ensures the second constraint in~\eqref{eq:upperb2a}
	is also feasible.

	For item~\eqref{lem:2b:b},
	we can reformulate the constraints for $\Delta$'s
	in~\eqref{eq:optb2b} as 
	\begin{equation}\label{eq:upperb2b}
		\left\{
		\begin{aligned}
		\Delta_{11} x_1 & =
		\begin{bmatrix}M_{11}&M_{12} \end{bmatrix} x,\\
		\begin{bmatrix}
			\Delta_{0}& \Delta_{1}&\cdots & \Delta_{d}
		\end{bmatrix}
		\begin{bmatrix}x_2^T&\mu x_2^T&\cdots &\mu^d x_2^T\end{bmatrix}^T
		& = \begin{bmatrix}M_{21} &M_{22}\end{bmatrix}x.
		\end{aligned}\right.
	\end{equation}
	We can consider the optimal $\Delta_{11\star}$ and $[\Delta_{0\star}, \dots, \Delta_{d\star}]$
	separately.

	For the first equation in~\eqref{eq:upperb2b},
	we derive from~\Cref{lem:mapping} that:
	(i)
	If $x_1\neq 0$, then the solution $\Delta_{11}$ with the minimal Frobenius 
	norm satisfies 
	\begin{equation}\label{eq:2b:x1}
		\|\Delta_{11\star}\|_F=
	\|\begin{bmatrix}M_{11}&M_{12} \end{bmatrix} x\|/ \|x_1\|;
	\end{equation}
	(ii)
	If $x_1=0$ and $\begin{bmatrix}M_{11}&M_{12} \end{bmatrix} x = M_{12}x_2=0$,
	then the solution with the minimal Frobenius norm is $\Delta_{11\star}=0$, 
	which also satisfies~\eqref{eq:2b:x1} by letting 
	the fraction $\frac{0}{0}$ to have value $0$; and
	(iii)
	If $x_1=0$ and $M_{12}x_2\neq 0$, then there is no solution
	$\Delta_{11}$ and the optimization~\eqref{eq:optb2b} is infeasible.

	For the second equation in~\eqref{eq:upperb2b},
	we derive from~\Cref{lem:mapping} that:
	(i)
	If $x_2\neq 0$, then the solution $[\Delta_{0}, \dots, \Delta_{d}]$
	with a minimal Frobenius norm satisfies 
	\begin{equation}\label{eq:2b:x2}
		\|[\Delta_{0\star},\dots,\Delta_{d\star}]\|_F=
	\|\begin{bmatrix}M_{21}&M_{22} \end{bmatrix} x\|/
	\| \begin{bmatrix}x_2^T&\mu x_2^T&\cdots &\mu^d
	x_2^T\end{bmatrix}\|;
	\end{equation}
	(ii)
	If $x_2=0$ and $\begin{bmatrix}M_{21}&M_{22} \end{bmatrix} x = M_{21}x_1=0$,
	then the solution with the minimal Frobenius norm
	is $[\Delta_{0\star},\dots,\Delta_{d\star}]=0$,
	which satisfies~\eqref{eq:2b:x1} by letting 
	the fraction $\frac{0}{0}$ to have value $0$; and
	(iii)
	If $x_2=0$ and $M_{21}x_1\neq 0$, then there is no solution
	$[\Delta_{0}, \dots, \Delta_{d}]$ and the optimization~\eqref{eq:optb2b} is infeasible.

	Combining~\eqref{eq:2b:x1} and~\eqref{eq:2b:x2}, we proved~\eqref{eq:optdlt2b}.
\end{proof}

\begin{theorem}\label{thm:2b}
Let $S(z)$ be a Rosenbrock system given by~\eqref{eq:rosenbrock},
$\lambda\in\C$ be a scalar with $\det(S(\lambda))\neq 0$,
$G_1,G_2,H_1,H_2$ be Hermitian matrices from~\eqref{eq:gmat},
and $U\in \C^{r+n,p}$ and $V\in \C^{r+n,k}$ 
form orthonormal basis of the null spaces
of $G_1$ and $G_2$, respectively.
\begin{enumerate}[(a)]
	\item \label{thm:2b:iab}
	$\eta(\lambda;A,B) <\infty$ and it satisfies 
	\begin{equation}\label{eq:AB}
		\left(\eta(\lambda;A,B)\right)^2=
		\min_{y\in\bbS^{k}}
		\left(\frac{y^*V^*G_1Vy}{y^*y} \right)
		\equiv
		\lambda_{\min}\left(V^* G_1 V\right).
	\end{equation}

	\item \label{thm:2b:iap} 
	$\eta(\lambda;A,P) <\infty$ and it satisfies 
	\begin{eqnarray}\label{eq:AP}
	(\eta(\lambda;A,P))^2= 
		\min_{x\in\bbS^{r+n}}
		\left(\frac{x^* G_1 x}{x^* H_1 x} +
		\frac{1}{\sum_{j=0}^d |\lambda|^{2j}} 
		\cdot \frac{x^* G_2 x}{x^* H_2 x}\right).
	\end{eqnarray}

	\item \label{thm:2b:ibc} 
	$\eta(\lambda;B,C) <\infty$ and it satisfies 
	\begin{eqnarray}\label{eq:BC}
		(\eta(\lambda;B,C))^2=
		\min_{x\in\bbS^{r+n}}
		\left(\frac{x^* G_1 x}{x^* H_2 x}+ \frac{x^* G_2 x}{x^*H_1 x} \right).
	\end{eqnarray}

	\item \label{thm:2b:icp} 
	$\eta(\lambda;C,P) <\infty$ and it satisfies 
	\begin{equation}\label{eq:CP2}
		\left(\eta(\lambda;C,P)\right)^2=
		\min_{y\in\bbS^{p}}
		\left(\frac{y^*U^* G_2 Uy}{y^*U^*H_{\lambda} Uy} \right)
		\equiv
		\lambda_{\min}\left(U^* G_2 U,\, U^*H_{\lambda} U \right),
	\end{equation}
	where $H_{\lambda}:=H_1+(\sum_{j=0}^d |\lambda|^{2j})\cdot H_2$,
	and we recall~\Cref{thm:2b} that $U^* G_2 U\succ 0$.
\end{enumerate}

\end{theorem}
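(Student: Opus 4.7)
The plan is to treat the four cases in parallel by reducing each one to \Cref{lem:2b}, which already encodes the minimum of $\nrm{\dS(z)}^2$ when the perturbed blocks occupy either a single block-row (part~\eqref{lem:2b:a}) or the $(1,1)$ and $(2,2)$ blocks (part~\eqref{lem:2b:b}). Following the bi-level decomposition~\eqref{eq:perror2}, for each $\Pi$ I will (i) match $S(\lambda)$ to the template of \Cref{lem:2b} by a suitable row/column block permutation so that the blocks indicated by $\Pi$ land in the correct positions, (ii) read off the inner-minimum $\varphi(x;\Pi)^2$ from~\eqref{eq:optdlt2} or~\eqref{eq:optdlt2b}, and (iii) express the result in terms of the matrices $G_i, H_i$ from~\eqref{eq:gmat}, restricting $x$ to $\ker G_1$ or $\ker G_2$ via the basis $U$ or $V$ when feasibility demands it.

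For case~\eqref{thm:2b:iab} with $\Pi=\{A,B\}$, both $\dA$ and $\dB$ lie in the top block-row, so \Cref{lem:2b}\eqref{lem:2b:a} applies with $M_{11}=A-\lambda I_r$, $M_{12}=B$, $M_{21}=C$, $M_{22}=P(\lambda)$, $\Delta_{11}=\dA$, $d=0$, and $\mu=0$. The feasibility condition $[C,P(\lambda)]x=0$ forces $x\in\ker G_2$, so I substitute $x=Vy$; the orthonormality of $V$ then turns the objective from~\eqref{eq:optdlt2} into $y^*V^*G_1Vy$ over $y\in\bbS^{k}$, yielding~\eqref{eq:AB}. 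For case~\eqref{thm:2b:icp} with $\Pi=\{C,P\}$, both perturbed blocks sit in the bottom block-row, so I swap the two row blocks of $S(\lambda)$ to bring them into the top-row positions and apply \Cref{lem:2b}\eqref{lem:2b:a} with $\mu=\lambda$. Feasibility now becomes $[A-\lambda I_r,B]x=0$, i.e., $x\in\ker G_1$, so $x=Uy$; the denominator $\|x_1\|^2+(\sum_{j=0}^d|\lambda|^{2j})\|x_2\|^2$ translates into $y^*U^*(H_1+\gamma H_2)Uy=y^*U^*H_\lambda Uy$, producing~\eqref{eq:CP2}.

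For cases~\eqref{thm:2b:iap} and~\eqref{thm:2b:ibc} the two perturbed blocks lie in different block-rows, so \Cref{lem:2b}\eqref{lem:2b:b} applies and no feasibility constraint is imposed on $x$. For $\Pi=\{A,P\}$ the identification is direct with $\Delta_{11}=\dA$, $\sum_{j=0}^d\mu^j\Delta_j=\dP(\lambda)$ and $\mu=\lambda$; substituting $\|x_1\|^2=x^*H_1x$, $\|x_2\|^2=x^*H_2x$, and $\sum_j|\mu|^{2j}=\gamma$ into~\eqref{eq:optdlt2b} reproduces~\eqref{eq:AP}. For $\Pi=\{B,C\}$ the perturbations occupy off-diagonal positions, so I first permute the column blocks of $S(\lambda)$ (sending $x\mapsto[x_2^T,x_1^T]^T$) to move $\dB$ to position $(1,1)$ and $\dC$ to position $(2,2)$; after this permutation \Cref{lem:2b}\eqref{lem:2b:b} applies with $\mu=1$ and $d=0$ on both sides, and the roles of $H_1$ and $H_2$ in the two denominators swap, giving~\eqref{eq:BC}.

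The main technical point is reconciling the per-$x$ \emph{infeasibility} clause of \Cref{lem:2b}\eqref{lem:2b:b} with the clean Rayleigh-quotient formulas~\eqref{eq:AP} and~\eqref{eq:BC}, which tacitly rely on the convention $\tfrac{0}{0}=0$ from~\eqref{eq:defrq}. I will verify that in every infeasible configuration ($x_1=0$ with $Bx_2\ne 0$, $x_2=0$ with $Cx_1\ne 0$, and their analogues after permutation) the inner minimum is $+\infty$, so such $x$ are irrelevant to the outer minimization, while in the benign degenerate cases the $\tfrac{0}{0}$ convention faithfully represents the zero contribution from~\eqref{eq:optdlt2b}. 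Finiteness $\eta(\lambda;\Pi)<\infty$ in all four cases will then follow by exhibiting an explicit feasible $x$: in~(a) and~(d) any nonzero $x\in\ker G_2$ or $\ker G_1$ (which exist by dimension count since $\det S(\lambda)\ne 0$ forces $G_1+G_2\succ 0$ but each $G_i$ has rank at most $r+n-1$), while in~(b) and~(c) any $x$ with $x_1\ne 0$ and $x_2\ne 0$ avoids infeasibility, and the positive-definiteness $V^*G_1V\succ 0$ and $U^*G_2U\succ 0$ from \Cref{thm:1b} ensures the Rayleigh-quotient expressions remain well-defined and finite.
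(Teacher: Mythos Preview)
Your proposal is correct and follows essentially the same route as the paper's proof: each of the four cases is reduced to the appropriate part of \Cref{lem:2b} via a block permutation of $S(\lambda)$, with the inner minimum then rewritten in terms of $G_i,H_i$ and the kernel bases $U,V$. The only cosmetic difference is in case~\eqref{thm:2b:ibc}, where you permute the \emph{column} blocks to place $\dB,\dC$ on the diagonal, while the paper permutes the \emph{row} blocks; both yield the identical Rayleigh-quotient expression. Your explicit discussion of the infeasibility clause of \Cref{lem:2b}\eqref{lem:2b:b} versus the $\tfrac{0}{0}$ convention is more detailed than the paper's, which simply invokes~\eqref{eq:perror2} and~\eqref{eq:optdlt2b} together.
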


\begin{proof}
	For item~\eqref{thm:2b:iab}, it follows from~\eqref{eq:perror2inner} 
	with $\Pi=\{A,B\}$ that 
	\begin{equation*}
		(\varphi(x;A,B))^2 = \!\!\!\!\! \min_{\substack{ \dA \in  \C^{r,r}\\ \dB \in \C^{r,n}}}
				\Bigg\{{\|\dA\|}_F^2+{\|\dB\|}_F^2 \colon
		\begin{bmatrix} (A-\lambda I_r)-\dA & B-\dB \\ C & P(\lambda)\end{bmatrix}x=0 \Bigg\}.
	\end{equation*}
	According to~\Cref{lem:2b}~\eqref{lem:2b:a} with $M=S(\lambda)$ and $\mu=0$, 
	the optimal objective value $\varphi(x;A,B)<\infty$ if and only if $\begin{bmatrix}C &
	P(\lambda) \end{bmatrix} x = 0$ (i.e., $G_2x=0$), where 
	\[
		(\varphi(x;A,B))^2 = \frac{\|\begin{bmatrix}A-\lambda I_r & B \end{bmatrix} x\|^2}{\|x\|^2}
		= \frac{x^*G_1x}{x^*x}.
	\]
	By~\eqref{eq:perror2} and the 
	parameterization $x=Vy$ of the null vector $x$ of $G_2$,
	we obtain~\eqref{eq:AB}.

	For item~\eqref{thm:2b:iap}, it follows from~\eqref{eq:perror2inner} with $\Pi=\{A,P\}$ that 
	\begin{equation*}
		(\varphi(x;A,P))^2 = \!\!\!\!\! \min_{\substack{ \dA \in
		\C^{r,r}\\ \Delta_j \in \C^{n,n} \\ j=0,\dots,d}}
		\!\!\left\{\!
		\|\dA\|_F^2+\sum_{j=0}^d \|\Delta_j\|_F^2 
		\colon
		\!\!\left[
			\begin{smallmatrix} 
				(A-\lambda I_r)-\dA & B \\
				C & P(\lambda)-\sum\limits_{j=0}^d \lambda^j\Delta_j 
		\end{smallmatrix}
		\right] x=0 
		\!\right\}.
	\end{equation*}
	According to~\Cref{lem:2b}~\eqref{lem:2b:b} with $M=S(\lambda)$ and $\mu=\lambda$, 
	the optimal objective value 
	\[
		(\varphi(x;A,P))^2 = 
		\frac{\|\begin{bmatrix} (A-\lambda I_r) & B \end{bmatrix} x\|^2}{\|x_1\|^2} 
			+\frac{\|\begin{bmatrix} C  & P(\lambda) \end{bmatrix}x\|^2}
			{(\sum_{j=0}^d|\lambda|^{2j})\cdot\|x_2\|^2},
	\]
	which leads to~\eqref{eq:AP} by recalling~\eqref{eq:perror2}. 
	By taking a particular $x$ satisfying $x_1\neq 0$ and $x_2\neq 0$,
	we can derive from~\eqref{eq:AP} that 
	$\eta(\lambda;A,P)\leq \varphi(x;A,P)<\infty$.

	For item~\eqref{thm:2b:ibc}, it follows from~\eqref{eq:perror2inner} 
	with $\Pi=\{B,C\}$ that 
	\begin{equation*}
		(\varphi(x;B,C))^2 = \!\!\!\!\! \min_{\substack{ \dB \in  \C^{r,n}\\ \dC \in \C^{n,r}}}
				\Bigg\{{\|\dB\|}_F^2+{\|\dC\|}_F^2 \colon
		\begin{bmatrix} A-\lambda I_r & B-\dB \\ C-\dC & P(\lambda)
	\end{bmatrix}x=0 \Bigg\}.
	\end{equation*}
	We can switch the row blocks in the constraint $(S(\lambda)-\dS(\lambda))x=0$
	to place $\dC$ and $\dB$ in the diagonal blocks.
	The rest of the proof is completely analogous to item~\eqref{thm:2b:iap}.

	Finally, for item~\eqref{thm:2b:icp}, it follows from~\eqref{eq:perror2inner} 
	with $\Pi=\{C,P\}$ that 
	\[
		(\varphi(x;C,P))^2 = \!\!\!\!\! \min_{\substack{ \dC \in
		\C^{n,r}\\ \Delta_j \in \C^{n,n} \\ j=0,\dots,d}}
		\!\!\left\{\!
		\|\dC\|_F^2+\sum_{j=0}^d \|\Delta_j\|_F^2 
		\colon
		\!\!\left[
			\begin{smallmatrix} 
				A-\lambda I_r & B \\
				C-\dC & P(\lambda)-\sum\limits_{j=0}^d \lambda^j\Delta_j 
		\end{smallmatrix}
		\right] x=0 
		\!\right\}.
	\]
	By switching the row blocks in the constraint
	$(S(\lambda)-\dS(\lambda))x=0$, we can place the perturbations
	$\Delta$'s to the first row blocks, and then the proof
	follows~item~\eqref{thm:2b:iab}.
\end{proof}

\begin{remark}\label{rmk:etacbp}
\rm
To obtain the backward error $\eta(\lambda; A, C)$ 
and $\eta(\lambda; B, P)$ of the Rosenbrock system matrix $S(z)$,
we can again consider the transposed
Rosenbrock system matrix $(S(z))^T$ as given by~\eqref{eq:hats}.
By block symmetry in transposing, we have 
\begin{equation}\label{eq:acbp}
\eta(\lambda;A,C) \equiv \widehat\eta(\lambda; \widehat A,\widehat B)
\quad\text{and}\quad
\eta(\lambda;B,P) \equiv \widehat\eta(\lambda; \widehat C,\widehat P),
\end{equation}
where $\widehat\eta(\lambda; \Pi)$ denotes
the eigenvalue backward error of the new system matrix $\widehat S(z)$ subject to a 
perturbation pattern $\Pi$.
Now, for the perturbation patterns
$\Pi=\{\widehat A,\widehat B\}$ and $\{\widehat C,\widehat P\}$
under consideration, their corresponding backward error can be obtained
quickly by the formulas~\eqref{eq:AB} and~\eqref{eq:CP2} applied to the
transposed system matrix.
\end{remark}

\subsubsection{Perturbing Any Three Blocks}
Finally, we consider the cases where exactly three blocks of $S(z)$ 
are to be perturbed. 
There are four such type of perturbations,
and we focus on the three cases with 
$\Pi=\{A,B,C\}$, $\{A,B,P\}$, and $\{A,C,P\}$.
The other case with $\Pi=\{B,C,P\}$ is readily  
solved by the transposed system matrix.


\begin{lemma}\label{lem:3b}
Let $\mu_i\in \C$,
$M_{ij} \in \C^{r_i,n_j}$, and 
$x_j\in\C^{n_j}$, for $i,j\in\{1,2\}$, where $r_1+r_2=n_1+n_2$.
Suppose that $x=[x_1^T,x_2^T]^T\neq 0$.
The minimization problem
\begin{equation}\label{eq:optb3}
\begin{aligned}
	\min \quad & \|\Delta_{12}\|_F^2+\sum_{j=0}^{d_1}\|\Delta_{j}\|_F^2+ 
	\sum_{j=0}^{d_2}\|\widetilde \Delta_{j}\|_F^2\\
\text{s.t.} \quad 
		   & \text{$\Delta_{12}\in \C^{r_1,n_2}$, $\Delta_{j}\in \C^{r_1,n_1}$, for $j=0,1,\dots,d_1$,} \\
		   & \text{$\widetilde \Delta_{j}\in \C^{r_1,n_1}$, for $j=0,1,\dots,d_2$,} \\
		  &
\begin{bmatrix} 
M_{11} - \sum_{j=0}^{d_1} \mu_1^j \Delta_{j} & M_{12} - \Delta_{12}\\
M_{21} - \sum_{j=0}^{d_2} \mu_2^j \widetilde \Delta_{j}  & M_{22} 
\end{bmatrix} 
\begin{bmatrix} x_1 \\ x_2 \end{bmatrix} =0
\end{aligned}
\end{equation}
has an optimal objective value given by 
\begin{equation}\label{eq:optdlt3b}
\|\Delta_{12\star}\|_F^2 
+ \sum_{j=0}^{d_1}\|\Delta_{j\star}\|_F^2 
+ \sum_{j=0}^{d_2}\|\widetilde \Delta_{j\star}\|_F^2 
=
\frac{\|[\begin{smallmatrix} M_{11} & M_{12}
	\end{smallmatrix}] x\|^2}{c_1\|x_1\|^2+\|x_2\|^2} 
	+\frac{\|[\begin{smallmatrix} M_{21} & M_{22} \end{smallmatrix}]
	x\|^2}{c_2\cdot\|x_1\|^2},
\end{equation}
where $c_i=\sum_{j=0}^{d_i} |\mu_i|^{2j}$ for $i=1,2$, and 
we assign a value of $0$ to a fraction in the form of $\{\frac{0}{0}\}$.
The optimal objective value~\eqref{eq:optdlt3b} is $\infty$,
if and only if $x_1=0$ and $M_{22}x_2\neq 0$, 
indicating the optimization~\eqref{eq:optb3} is infeasible.


\end{lemma}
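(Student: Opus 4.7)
The plan is to follow the same template used in the proofs of Lemma~\ref{lem:1b} and Lemma~\ref{lem:2b}: rewrite the block-matrix constraint as two independent linear equations, note that the objective splits as a sum over disjoint variables, and apply Lemma~\ref{lem:mapping} to each piece separately.

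First I would expand the constraint in \eqref{eq:optb3} row-by-row. The first block row gives
\[
\begin{bmatrix}\Delta_0 & \Delta_1 & \cdots & \Delta_{d_1} & \Delta_{12}\end{bmatrix}
\begin{bmatrix} x_1 \\ \mu_1 x_1 \\ \vdots \\ \mu_1^{d_1} x_1 \\ x_2\end{bmatrix}
= \begin{bmatrix}M_{11} & M_{12}\end{bmatrix}x,
\]
while the second block row gives
\[
\begin{bmatrix}\widetilde\Delta_0 & \cdots & \widetilde\Delta_{d_2}\end{bmatrix}
\begin{bmatrix} x_1 \\ \mu_2 x_1 \\ \vdots \\ \mu_2^{d_2} x_1 \end{bmatrix}
= \begin{bmatrix}M_{21} & M_{22}\end{bmatrix}x.
\]
Since the two sets of variables $\{\Delta_j,\Delta_{12}\}$ and $\{\widetilde\Delta_j\}$ are disjoint and the objective in \eqref{eq:optb3} is the sum of their squared Frobenius norms, the minimization decouples cleanly into two independent problems, one per row.

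Next I would apply Lemma~\ref{lem:mapping} to each row separately. For the first row, the stack vector $[x_1^T,\mu_1 x_1^T,\ldots,\mu_1^{d_1}x_1^T,x_2^T]^T$ is always nonzero (since $x\neq 0$), with squared norm $c_1\|x_1\|^2+\|x_2\|^2$; by Lemma~\ref{lem:mapping}(a), a minimizer always exists and attains squared Frobenius norm $\|[M_{11}\ M_{12}]x\|^2/(c_1\|x_1\|^2+\|x_2\|^2)$, which is the first term of \eqref{eq:optdlt3b}. For the second row, the stack vector has squared norm $c_2\|x_1\|^2$, so Lemma~\ref{lem:mapping}(a) applies when $x_1\neq 0$ and delivers the second term of \eqref{eq:optdlt3b}; when $x_1=0$, Lemma~\ref{lem:mapping}(b)--(c) take over, giving a zero minimizer exactly when $[M_{21}\ M_{22}]x = M_{22}x_2 = 0$ (consistent with the $\tfrac{0}{0}=0$ convention), and infeasibility precisely when $x_1=0$ and $M_{22}x_2\neq 0$.

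Finally, combining the two minimal values yields \eqref{eq:optdlt3b}, and the infeasibility characterization follows from the case analysis above. There is no real obstacle here; the only subtlety to handle carefully is the borderline $x_1=0$, $M_{22}x_2=0$ case, where one must verify that the $\tfrac{0}{0}=0$ convention in the stated formula is consistent with the zero minimizer from Lemma~\ref{lem:mapping}(b), mirroring the analogous bookkeeping already performed in the proof of Lemma~\ref{lem:2b}(b).
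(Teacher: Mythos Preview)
Your proposal is correct and follows essentially the same approach as the paper: split the block constraint into two row equations, observe the objective decouples over disjoint variable sets, and apply Lemma~\ref{lem:mapping} to each row with the same case analysis on $x_1$ for the second row. The only difference is presentational.
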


\begin{proof}
	We write the constraints for $\Delta$'s in~\eqref{eq:optb3} equivalently to 
	\begin{equation}\label{eq:upperb3}
		\left\{
		\begin{aligned}
		\begin{bmatrix}
			\Delta_{0}& \Delta_{1}&\!\!\!\cdots\!\!\! & \Delta_{d_1} & \Delta_{12}
		\end{bmatrix}
		\begin{bmatrix}x_1^T&\mu_1 x_1^T&\!\!\!\cdots\!\!\! &\mu_1^{d_1} x_1^T& x_2^T\end{bmatrix}^T
		&\!\! =
		\begin{bmatrix}M_{11}&M_{12} \end{bmatrix} x,\\
		\begin{bmatrix}
			\widetilde \Delta_{0}& \widetilde \Delta_{1}&\!\!\!\cdots\!\!\! &
			\widetilde \Delta_{d_2}
		\end{bmatrix}
		\begin{bmatrix}x_1^T&\mu_2 x_1^T&\!\!\!\cdots\!\!\! &\mu_2^{d_2} x_1^T\end{bmatrix}^T
		&\!\! = \begin{bmatrix}M_{21} &M_{22}\end{bmatrix}x.
		\end{aligned}\right.
	\end{equation}
	In the first equation,
	because of $\begin{bmatrix}x_1^T&\dots&\mu_1^{d_1}x_1^T&x_2^T\end{bmatrix}\neq 0$
	(due to $x=[x_1^T,x_2^T]\neq 0$),
	by \Cref{lem:mapping}
	there always exists solution 
	$[\Delta_{0}, \dots, \Delta_{d_1},\Delta_{12}]$
	and the solution with the minimal Frobenius norm satisfies 
	\begin{equation}\label{eq:3b:up}
		\|[\Delta_{0\star}, \dots, \Delta_{d_1\star},\Delta_{12\star}]\|_F^2
		= 
		\frac{\|\begin{bmatrix}M_{11}&M_{12} \end{bmatrix} x\|^2}
		{\| \begin{bmatrix}x_1^T&\mu_1x_1^T&\cdots &\mu_1^{d_1}x_1^T&
			x_2^T\end{bmatrix}\|^2}.
	\end{equation}
	In the second equation of~\eqref{eq:upperb3}, 
	we can derive from~\Cref{lem:mapping} that:
	(i) If $x_1\neq 0$, then
	the solution $[\widetilde \Delta_{0}, \dots, \widetilde \Delta_{d_2}]$
	with the minimal Frobenius norm satisfies 
	\begin{equation}\label{eq:3b:down}
		\|[\widetilde \Delta_{0\star}, \dots, \widetilde \Delta_{d_2\star}]\|_F^2
		= 
		\frac{\|\begin{bmatrix}M_{21}&M_{22} \end{bmatrix} x\|^2}
		{\| \begin{bmatrix}x_1^T&\mu_2x_1^T&\cdots
		&\mu_2^{d_2}x_1^T\end{bmatrix}\|^2};
	\end{equation}
	(ii) If $x_1=0$ and $[M_{21},M_{22}]x=M_{22}x_2=0$,
	then the solution with the minimal Frobenius norm is
	$[\widetilde \Delta_{0\star}, \dots, \widetilde \Delta_{d_2\star}]=0$, 
	which also satisfies~\eqref{eq:3b:down} by letting the 
	fraction $\frac{0}{0}$ to have a value $0$; and
	(iii) If $x_1=0$ and $M_{22}x_2\neq 0$,
	then there is no solution $[\widetilde \Delta_{0}, \dots, \widetilde \Delta_{d_2}]$
	and the optimization~\eqref{eq:optb3} is infeasible.

	Combining~\eqref{eq:3b:up} and~\eqref{eq:3b:down}, we obtain~\eqref{eq:optdlt3b}.
\end{proof}

\begin{theorem}\label{thm:3b}
Let $S(z)$ be a Rosenbrock system matrix as given by~\eqref{eq:rosenbrock},
$\lambda\in\C$ be a scalar with $\det(S(\lambda))\neq 0$,
and $G_1,G_2,H_1,H_2$ be Hermitian matrices from~\eqref{eq:gmat}.

\begin{enumerate}[(a)]
	\item \label{thm:3b:iabc}
	$\eta(\lambda;A,B,C) <\infty$ and it satisfies 
	\begin{equation}\label{eq:abc}
		(\eta(\lambda;A,B,C))^2=
		\min_{x\in\bbS^{r+n}}
		\left(\frac{x^* G_1 x}{x^*x} + \frac{x^* G_2 x}{x^*H_1x}\right).
	\end{equation}

	\item \label{thm:3b:iabp}
	$\eta(\lambda;A,B,P) <\infty$ and it satisfies 
	\begin{equation}\label{eq:abp}
		(\eta(\lambda;A,B,P))^2 =
		\min_{x\in\bbS^{r+n}}
		\left(\frac{x^* G_1 x}{x^*x} + \frac{1}{\sum_{j=0}^d |\lambda|^{2j}}\cdot \frac{x^*
		G_2 x}{x^*H_2x}\right).
	\end{equation}

	\item \label{thm:3b:ibcp}
	$\eta(\lambda;B,C,P) <\infty$ and it satisfies 
	\begin{equation}\label{eq:bcp}
		(\eta(\lambda;B,C,P))^2 =
		\min_{x\in\bbS^{r+n}}
		\left(\frac{x^* G_1 x}{x^*H_2x} + \frac{x^* G_2 x}{x^*H_\lambda
		x}\right),
	\end{equation}
	where $H_{\lambda}:=H_1+(\sum_{j=0}^d |\lambda|^{2j})\cdot H_2$.

\end{enumerate}

\end{theorem}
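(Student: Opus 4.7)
The plan is to reduce each case to \Cref{lem:3b} via the bi-level formulation~\eqref{eq:perror2}--\eqref{eq:perror2inner}, possibly after a row and/or column permutation of the constraint $(S(\lambda)-\dS(\lambda))x=0$ chosen so that the unperturbed block sits in the $(2,2)$ slot of the template~\eqref{eq:optb3}. Once the template is matched, the value $\varphi(x;\Pi)^2$ is given directly by~\eqref{eq:optdlt3b}, and minimizing over $x\in\bbS^{r+n}$ produces the claimed SRQ2 formula; the resulting numerators translate back to $x^*G_1x$ and $x^*G_2x$ via~\eqref{eq:gmat}, while $\|\widetilde x_1\|^2$ and $\|\widetilde x_2\|^2$ become $x^*H_1x$ or $x^*H_2x$ according to the permutation.

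For item (a), the perturbed blocks $A$, $B$, $C$ already occupy positions $(1,1),(1,2),(2,1)$ and the unperturbed $P(\lambda)$ sits at $(2,2)$, so \Cref{lem:3b} applies directly with $d_1=d_2=0$ (hence $c_1=c_2=1$). Formula~\eqref{eq:optdlt3b} then yields $x^*G_1x/(x^*x)+x^*G_2x/(x^*H_1x)$, which is~\eqref{eq:abc}. For item (b) with $\Pi=\{A,B,P\}$, the unperturbed block is $C$; I would swap the two column blocks (and correspondingly reorder $x$ to $\widetilde x=[x_2^T,x_1^T]^T$) so that $C$ moves to the $(2,2)$ position, leaving $B,\,A-\lambda I_r,\,P(\lambda)$ at $(1,1),(1,2),(2,1)$ respectively, and \Cref{lem:3b} applies with $d_1=0$, $d_2=d$, $\mu_2=\lambda$, so $c_1=1$ and $c_2=\gamma=\sum_{j=0}^d|\lambda|^{2j}$. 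For item (c) with $\Pi=\{B,C,P\}$, the unperturbed block is $A-\lambda I_r$; swapping both row and column blocks places it at $(2,2)$, and \Cref{lem:3b} applies with $d_1=d$, $\mu_1=\lambda$, $d_2=0$, so $c_1=\gamma$ and $c_2=1$. Assembling the two fractions in~\eqref{eq:optdlt3b} with the right $c_i$ and tracking which of $x_1,x_2$ corresponds to $\widetilde x_1$ produces~\eqref{eq:abp} and~\eqref{eq:bcp}.

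Finiteness of $\eta(\lambda;\Pi)$ follows from the infeasibility criterion of \Cref{lem:3b}: $\varphi(x;\Pi)$ is infinite only when $\widetilde x_1=0$ and $\widetilde M_{22}\widetilde x_2\neq 0$, so picking any $x\in\bbS^{r+n}$ with $\widetilde x_1\neq 0$ (namely $x_1\neq 0$ in item (a), $x_2\neq 0$ in items (b) and (c)) already gives a finite objective and therefore a finite minimum. The main obstacle is not conceptual but clerical: keeping the permuted indices consistent so that $G_1,G_2,H_1,H_2$ land on the correct side of the Rayleigh quotients in~\eqref{eq:abp} and~\eqref{eq:bcp}. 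This book-keeping is directly analogous to the row/column swaps already carried out in the proof of~\Cref{thm:2b}.
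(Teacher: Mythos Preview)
Your proposal is correct and follows essentially the same route as the paper: each case is reduced to \Cref{lem:3b} after the appropriate row/column permutation to place the unperturbed block at position $(2,2)$, with the same choices of $d_1,d_2,\mu_1,\mu_2$ you indicate, and finiteness is argued by exhibiting an $x$ with the relevant sub-vector nonzero.
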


\begin{proof}
	For item~\eqref{thm:3b:iabc}, it follows
	from~\eqref{eq:perror2inner} with $\Pi=\{A,B,C\}$ that 
	\begin{align*}
		(\varphi(x;A,B,C))^2 = 
		& \min~ {\|\dA\|}_F^2+{\|\dB\|}_F^2+ {\|\dC\|}_F^2\\
		& ~\text{s.t.}~~ \dA\in \C^{r,r}, \dB \in  \C^{r,n}, \dC \in \C^{n,r},\\
		& \text{~~~}~~~ \begin{bmatrix} A-\lambda I_r -\dA & B-\dB \\
		C-\dC & P(\lambda) \end{bmatrix}x=0.
	\end{align*}
	According to~\Cref{lem:3b}, with $\mu_1=\mu_2=1$, $d_1=d_2=0$, and
	$M=S(\lambda)$, we obtain 
	\[
		(\varphi(x;A,B,C))^2=
			\frac{\|[\begin{smallmatrix} (A-\lambda I_r) & B
			\end{smallmatrix}] x\|^2}{\|x_1\|^2+\|x_2\|^2} 
			+\frac{\|[\begin{smallmatrix} C  & P(\lambda) \end{smallmatrix}]x\|^2}
			{\|x_1\|^2},
	\]
	which leads to~\eqref{eq:abc} by recalling~\eqref{eq:perror2}.
	Moreover, by an $x$ with $x_1\neq 0$, we obtain
	$\eta(\lambda;A,B,C)\leq \varphi(x;A,B,C) < \infty$,

	For item~\eqref{thm:3b:iabp}, it follows
	from~\eqref{eq:perror2inner} with $\Pi=\{A,B,P\}$ that 
	\begin{align*}
		(\varphi(x;A,B,P))^2 = 
		& \min~ {\|\dA\|}_F^2+{\|\dB\|}_F^2+ \textstyle \sum_{j=0}^d{\|\Delta_j\|}_F^2\\
		& ~\text{s.t.}~~ \dA\in \C^{r,r}, \dB \in  \C^{r,n}, \Delta_j \in \C^{n,n},\text{ for $j=0,1,\dots,d$},\\
		& \text{~~~}~~~
		\begin{bmatrix}
			B-\dB & A-\lambda I_r -\dA   \\ 
			P(\lambda) -\sum_{j=0}^d \lambda^j\Delta_j & C 
		\end{bmatrix}
		\begin{bmatrix}x_2 \\ x_1 \end{bmatrix}=0,
	\end{align*}
	where the equality constraint 
	is obtained from $(S(\lambda)-\dS(\lambda))x=0$, 
	by switching the 
	column blocks of the matrix $S(\lambda)-\dS(\lambda)$.
	Then, by applying~\Cref{lem:3b} with $\mu_1=1$, $d_1=0$, $\mu_2=\lambda$, $d_2=d$, 
	the rest of the proof follows that of item~\eqref{thm:3b:iabc}.

	For item~\eqref{thm:3b:ibcp}, it follows
	from~\eqref{eq:perror2inner} with $\Pi=\{B,C,P\}$ that 
	\begin{align*}
		(\varphi(x;B,C,P))^2 = 
		& \min~ {\|\dB\|}_F^2+{\|\dC\|}_F^2+ \textstyle \sum_{j=0}^d{\|\Delta_j\|}_F^2\\
		& ~\text{s.t.}~~ \dB\in \C^{r,n}, \dC \in  \C^{n,r}, \Delta_j \in \C^{n,n},\text{ for $j=0,1,\dots,d$},\\
		& \text{~~~}~~~
		\begin{bmatrix} 
			P(\lambda) -\sum_{j=0}^d \lambda^j\Delta_j & 		C -\dC \\
			B-\dB    & A-\lambda I_r 
		\end{bmatrix}
		\begin{bmatrix}x_2 \\ x_1 \end{bmatrix}=0,
	\end{align*}
	where the equality constraint 
	is obtained from $(S(\lambda)-\dS(\lambda))x=0$, 
	by switching both the row and column blocks of the matrix $S(\lambda)-\dS(\lambda)$.
	Again, by applying~\Cref{lem:3b} with $\mu_1=\lambda $, $d_1=d$, $\mu_2=1$, $d_2=0$, 
	the rest of the proof follows that of item~\eqref{thm:3b:iabc}.
\end{proof}

\begin{remark}\label{rmk:etaacp}
\rm
The backward error $\eta(\lambda; A, C, P)$ of the Rosenbrock system
matrix $S(z)$, can be obtained from the transposed Rosenbrock system matrix
$\widehat S(z)\equiv (S(z))^T$ as given by~\eqref{eq:hats}.
By block symmetry, we have 
\begin{equation}\label{eq:acp}
\eta(\lambda;A,C,P) \equiv \widehat\eta(\lambda; \widehat A,\widehat B,\widehat P),
\end{equation}
where $\widehat\eta(\lambda; \widehat A,\widehat B,\widehat P)$
denotes the eigenvalue backward error of the new Rosenbrock system matrix
$\widehat S(z)$ subject to a perturbation pattern $\Pi=\{\widehat A,\widehat B, \widehat P\}$.
Now, $\widehat\eta(\lambda; \widehat A,\widehat B,\widehat P)$ 
can be evaluated by the formula~\eqref{eq:abp}
applied to the transposed system $\widehat S(z)$.
\end{remark}

%
\section{Minimization of Sum of Two Rayleigh Quotients}\label{sec:SRQ2NEPv}
Regardless of full and partial block perturbations,
the eigenvalue backward errors of the Rosenbrock system
matrix can be expressed as an optimization problem in the form of
\begin{equation}\label{eq:srq2min}
	\min_{x\in\mathbb C^n,\ \|x\|=1}~
	f(x):= \frac{x^*A_1x}{x^*(\alpha_1 I+\beta_1 A_3)x} 
	+ \frac{x^*A_2x}{x^*(\alpha_2 I+\beta_2 A_3)x},
\end{equation}
where $A_1,A_2,A_3\in\C^{n,n}$ are Hermitian and positive semidefinte
matrices,
$\alpha_i,\beta_i$ are given scalars satisfying 
$\alpha_i I+\beta_i A_3\succeq 0$,
for $i=1,2$.
For example, in the case of full perturbation
with $\eta(\lambda)$ given by~\eqref{eq:bs},
we can set $A_1=G_1$, $A_2=G_2$, $A_3=H_2$,
$\alpha_1=\alpha_2=1$, $\beta_1=0$, and $\beta_2=\gamma-1$, 
noticing that $H_1=I-H_2$.
For partial perturbations, 
a few cases (\Cref{thm:1b,thm:2b})
involve a single Rayleigh quotient in $f(x)$ (i.e., $A_2=0$),
but in general we must address two non-trivial Rayleigh quotients.

%
Since the objective function is the Sum of Two generalized Rayleigh Quotients, 
we call~\eqref{eq:srq2min} an SRQ2 minimization.
Recall that a Rayleigh quotient of the form $\frac{0}{0}$ is 
assigned a value of $0$ by convention~\eqref{eq:defrq}.
This ensures $f(x)$ in~\eqref{eq:srq2min} is lower semi-continuous 
(i.e., $\lim_{x\to x_0}f(x)\leq f(x_0)$ for $x_0\neq 0$)
and hence, the minimization~\eqref{eq:srq2min} is well-defined.
Clearly, the objective function $f(x)$ is differentiable at 
$x\in\C^n$ satisfying $x^*(\alpha_i I+\beta_i A_3)x\neq 0$, for $i=1,2$
(i.e., both denominators of $f(x)$ are nonzero).

The SRQ2 minimization~\eqref{eq:srq2min},
subject to the spherical constraint $\|x\|=1$, 
can be solved 
by conventional Riemannian optimization techniques,
such as the Riemannian trust-regions method; see, e.g.,~\cite{Absil:2009}.
However, those general-purpose Riemannian optimization methods 
often target local minimizers, and they 
do not fully exploit the special SRQ2 form of the objective
function $f(x)$ for analysis and computation.

In this section, we introduce a novel technique to 
solve the SRQ2 minimization~\eqref{eq:srq2min}.
We will recast the problem into an optimization 
over the joint numerical range of the Hermitian matrices $A_1,A_2,A_3$.
The new problem typically presents fewer
local minimizers than the original one, 
so it is more favorable 
for analysis and computation. 
The convexity in the joint numerical range
also allows for development of a nonlinear eigenvector approach to
efficiently solve the optimization problem, 
as well as a visualization technique to aid in verification of 
the global optimality of solution.

\subsection{Minimization Over Joint Numerical Range}
Let $\mathcal M:=(A_1,A_2,A_3)$ contain the coefficient matrices of the 
SRQ2 minimization~\eqref{eq:srq2min}.
We define the Joint Numerical Range (JNR) associated with $\mathcal M$ as
\begin{equation}\label{eq:jnr}
	W(\mathcal M):=
	\left\{\rhom(x) \colon x\in\mathbb C^n,\ \|x\|=1
	\right\},
\end{equation}
where $\rhom\colon \C^n\to \R^3$ consists of the quadratic forms
of the Hermitian matrices in $\mathcal M$: 
\begin{equation}\label{eq:rho}
	\rhom(x):= \left[ x^*A_1x,\ x^* A_2x,\ x^*A_3x\right]^T.
\end{equation}
It is well-known that the JNR $W(\mathcal M)$ is a closed and connected
region in $\R^3$, 
and it is a convex set if the size $n$ of the matrices $A_i$ satisfies $n\geq 3$; 
see, e.g.,~\cite{Au:1983,Muller:2020}.

In the SRQ2 minimization~\eqref{eq:srq2min}, 
the objective function $f(x)$ is
defined through the three quadratic forms $\{x^*A_ix\}$, for $i=1,2,3$.  
Hence, we can write 
\begin{equation}\label{eq:compf}
	f(x)=g(\rhom(x)),
\end{equation}
where $g\colon \R^3\to \R$ is given by 
\begin{equation}\label{eq:gfun}
	g(y) := \frac{y_1}{\alpha_1+\beta_1y_3}+\frac{y_2}{\alpha_2+\beta_2
	y_3},
\end{equation}
and $y\equiv [y_1,y_2,y_3]^T\in\R^3$.
Consequently, by using an intermediate variable $y=\rhom(x)$, 
we recast the SRQ2 minimization~\eqref{eq:srq2min} into
\begin{equation}\label{eq:optw}
	\min_{y\in W(\mathcal M)} g(y),
\end{equation}
where the feasible set $W(\mathcal M)$, the JNR as defined in~\eqref{eq:jnr},
contains all possible values of $y=\rhom(x)$ over 
$x\in\bbS^n$.
We call problem~\eqref{eq:optw} the {\em JNR minimization} 
associated with the SRQ2 minimization~\eqref{eq:srq2min}.

In the JNR minimization~\eqref{eq:optw},
the variable $y\in\mathbb R^3$ is a real and $3$-dimensional vector, 
as in contrast to a complex and $n$-dimensional variable vector $x\in\mathbb C^n$
of the original SRQ2 minimization~\eqref{eq:srq2min}.
Typically, $n \gg 3$ in practice, so the new feasible region
$W(\mathcal M)$ for $y$ is a convex set.
This convexity facilitates analysis, computation,
and visualization the JNR minimization~\eqref{eq:jnr},
as to be shown shortly.

Since the map $\rhom\colon \C^n\to \R^3$ is 
generally not bijective,
the two optimization problems~\eqref{eq:srq2min} and~\eqref{eq:optw}
are, however, not strictly {\em equivalent}. 
The relationship of the solution for the two problems are summarized in~\Cref{lem:relation}.
Recall that by the standard notion in optimization,
a general optimization problem $\min\{ F(x)\colon x\in
\Omega\}$ has a global minimizer $x_{\star}\in\Omega$, if it holds 
$F(x) \geq F(x_{\star})$ for all $x\in \Omega$.
It has a local minimizer $x_{\star}\in\Omega$,
if $F(x) \geq F(x_{\star})$ for all $x\in \Omega$ with
$x$ sufficiently close to $x_{\star}$.

\begin{lemma}\label{lem:relation}
	Consider the optimization problems~\eqref{eq:srq2min} and~\eqref{eq:optw}.
	\begin{enumerate}[(a)]
		\item \label{lem:relation:a}
		A vector
		$x_{\star}\in\bbS^n$ is a global minimizer of SRQ2 minimization~\eqref{eq:srq2min},
		if and only if $y_{\star}=\rhom(x_{\star})$ is a global
		minimizer of the JNR minimization~\eqref{eq:optw}.
		\item \label{lem:relation:b}
		If $y_{\star}\in\mathbb R^3$ is a local minimizer of the JNR
		minimization~\eqref{eq:optw},
		then any $x_{\star}\in\bbS^n$ with 
		$\rhom(x_{\star})=y_{\star}$ is a local 
		minimizer of the SRQ2 minimization~\eqref{eq:srq2min}.
	\end{enumerate}
\end{lemma}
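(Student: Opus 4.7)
The plan is to exploit the factorization $f(x) = g(\rho_{\!\!_{\mathcal M}}(x))$ from~\eqref{eq:compf} together with the fact that $W(\mathcal M)$ is exactly the image of $\mathbb S^n$ under $\rho_{\!\!_{\mathcal M}}$. The proof then splits cleanly into a purely set-theoretic argument for the global statement~\eqref{lem:relation:a} and a topological (continuity) argument for the local statement~\eqref{lem:relation:b}.

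For part~\eqref{lem:relation:a}, I would first prove the ``only if'' direction. Suppose $x_{\star}\in\bbS^n$ is a global minimizer of~\eqref{eq:srq2min} and set $y_{\star}=\rho_{\!\!_{\mathcal M}}(x_{\star})\in W(\mathcal M)$. For any $y\in W(\mathcal M)$, by definition of the JNR~\eqref{eq:jnr} there exists $x\in\bbS^n$ with $\rho_{\!\!_{\mathcal M}}(x)=y$, so $g(y)=g(\rho_{\!\!_{\mathcal M}}(x))=f(x)\geq f(x_{\star})=g(y_{\star})$, giving global optimality of $y_{\star}$. For the converse, assume $y_{\star}$ is a global minimizer of~\eqref{eq:optw} and let $x_{\star}\in\bbS^n$ satisfy $\rho_{\!\!_{\mathcal M}}(x_{\star})=y_{\star}$ (such $x_{\star}$ exists since $y_{\star}\in W(\mathcal M)$). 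Then for every $x\in\bbS^n$, the value $\rho_{\!\!_{\mathcal M}}(x)$ lies in $W(\mathcal M)$, whence $f(x)=g(\rho_{\!\!_{\mathcal M}}(x))\geq g(y_{\star})=f(x_{\star})$.

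For part~\eqref{lem:relation:b}, the key observation is that the map $\rho_{\!\!_{\mathcal M}}\colon\C^n\to\R^3$ defined in~\eqref{eq:rho} is continuous, since its components are quadratic forms. Let $y_{\star}$ be a local minimizer of~\eqref{eq:optw}, so there is an open neighborhood $N\subset\R^3$ of $y_{\star}$ with $g(y)\geq g(y_{\star})$ for all $y\in N\cap W(\mathcal M)$. Fix any $x_{\star}\in\bbS^n$ with $\rho_{\!\!_{\mathcal M}}(x_{\star})=y_{\star}$. By continuity, the preimage $\rho_{\!\!_{\mathcal M}}^{-1}(N)$ is an open set in $\C^n$ containing $x_{\star}$, so there exists a neighborhood $N'\subset\C^n$ of $x_{\star}$ with $\rho_{\!\!_{\mathcal M}}(N')\subset N$. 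For every $x\in N'\cap\bbS^n$ we then have $\rho_{\!\!_{\mathcal M}}(x)\in N\cap W(\mathcal M)$ and hence $f(x)=g(\rho_{\!\!_{\mathcal M}}(x))\geq g(y_{\star})=f(x_{\star})$, proving local optimality of $x_{\star}$.

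Neither step involves a real obstacle: part~\eqref{lem:relation:a} is a one-line consequence of $W(\mathcal M)=\rho_{\!\!_{\mathcal M}}(\bbS^n)$ combined with~\eqref{eq:compf}, and part~\eqref{lem:relation:b} is a standard continuity/pullback argument. The only thing worth being careful about is that the reverse direction of~\eqref{lem:relation:b} is \emph{not} claimed, i.e., local minimizers of the SRQ2 problem may not map to local minimizers of the JNR problem; this asymmetry is in fact the mechanism behind the paper's remark that the JNR reformulation eliminates spurious local minimizers, and it should be flagged so the reader does not expect an ``if and only if'' in part~\eqref{lem:relation:b}.
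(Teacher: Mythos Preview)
Your proposal is correct and follows essentially the same approach as the paper, which disposes of both items in a single sentence by invoking the parameterization $y=\rhom(x)$ and the identity $f(x)=g(y)$. Your write-up simply makes explicit the two directions in~(a) and the continuity/pullback argument in~(b) that the paper leaves implicit.
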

\begin{proof}
	Both items~\eqref{lem:relation:a} and~\eqref{lem:relation:b}
	are direct consequences of the parameterization $y=\rhom(x)$,
	which implies $f(x)=g(y)\geq g(y_{\star}) = f(x_{\star})$ as $x\to x_{\star}$.
\end{proof}

We stress that the reverse of~\Cref{lem:relation}~\eqref{lem:relation:b} 
is not necessarily true. 
Namely, if $x_{\star}\in\bbS^n$ is a local minimizer of the SRQ2
minimization~\eqref{eq:srq2min}, then $y_{\star}=\rhom(x_{\star})$ 
may not be a local minimizer of the JNR minimization~\eqref{eq:optw}.
This implies the JNR minimization~\eqref{eq:optw}
may have much fewer local minimizers than the original SRQ2
minimization~\eqref{eq:srq2min}.
This is a desirable property,
since the local optimization technique will have a better chance 
to find the global optimal solution.

\subsection{Local Optimality Condition and NEPv}
By exploiting the convexity in the JNR,
we will establish a variational characterization for the local minimizers
of the JNR minimization~\eqref{eq:optw}.
This characterization can also be equivalently expressed as 
a nonlinear eigenvalue problem with eigenvector dependency (NEPv).

\begin{theorem}\label{thm:nepvnr}
	Let $A_i\in\C^{n,n}$, for $i=1,2,3$, be Hermitian matrices of size
	$n\geq 3$, 
	$\mathcal M=(A_1,A_2,A_3)$,
	$W(\mathcal M)$ be as defined in~\eqref{eq:jnr},
	and 
	$g$ be as given by~\eqref{eq:gfun}.
    Suppose that $y_{\star}$ is a local minimizer of $g$ in $W(\mathcal M)$,
    and $g$ is differentiable at $y_{\star}$,
    then 
    \begin{equation}\label{eq:nepvnr}
    	\min_{y\in W(\mathcal M)} \nabla g(y_{\star})^T y =
    	\nabla g(y_{\star})^Ty_{\star}.
    \end{equation}
	Moreover,~\eqref{eq:nepvnr}
	holds if and only if $y_{\star}=\rhom(x_{\star})$
    for an $x_{\star}\in\bbS^n$ satisfying the NEPv:
    \begin{equation}\label{eq:nepv}
    	H(x)x= \lambda x,
    \end{equation}
    where $ H(x)\in\C^{n,n}$ is a Hermitian matrix given by 
    \begin{equation}\label{eq:hx}
    	 H(x):= \sum_{i=1,2}
        \left[
         \frac{1}{\alpha_i + \beta_i\cdot x^*A_3x}  A_i
         - \frac{(x^*A_ix)\cdot \beta_i} {(\alpha_i + \beta_i\cdot x^*A_3x)^2} A_3
        \right],
    \end{equation}
    and $\lambda$ is the smallest eigenvalue of $ H(x)$.
\end{theorem}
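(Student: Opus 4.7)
The plan is to establish~\eqref{eq:nepvnr} by a first-order optimality argument for a differentiable function on a convex set, and then translate this linear minimization over $W(\mathcal{M})$ into a Rayleigh quotient minimization whose optimum is exactly the NEPv~\eqref{eq:nepv}.

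Since $n\geq 3$, the JNR $W(\mathcal M)$ is a closed convex subset of $\mathbb R^3$ by the cited convexity results. Because $g$ is differentiable at $y_\star$ and $y_\star$ is a local minimizer of $g$ on the convex set $W(\mathcal M)$, the standard first-order necessary optimality condition (variational inequality) gives $\nabla g(y_\star)^T(y-y_\star)\geq 0$ for every $y\in W(\mathcal M)$. Rearranging yields~\eqref{eq:nepvnr} directly. Note that only local optimality and convexity of the feasible set enter here, so the argument does not require $g$ to be convex.

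For the NEPv characterization, I would introduce the Hermitian matrix $H_\star:=\sum_{i=1}^{3}\partial_i g(y_\star)\,A_i$ and observe that for any $x\in\mathbb S^n$ the parameterization $y=\rhom(x)$ yields $\nabla g(y_\star)^T y=x^*H_\star x$. A direct differentiation of~\eqref{eq:gfun} gives
\begin{equation*}
\partial_{y_1}g=\frac{1}{\alpha_1+\beta_1 y_3},\qquad
\partial_{y_2}g=\frac{1}{\alpha_2+\beta_2 y_3},\qquad
\partial_{y_3}g=-\sum_{i=1,2}\frac{\beta_i\,y_i}{(\alpha_i+\beta_i y_3)^2}.
\end{equation*}
Picking any $x_\star\in\mathbb S^n$ with $\rhom(x_\star)=y_\star$ (which exists because $y_\star\in W(\mathcal M)$) and substituting $y_{\star,i}=x_\star^*A_ix_\star$ into the above, one sees that $H_\star$ matches the expression for $H(x_\star)$ in~\eqref{eq:hx}.

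With this identification, and using $W(\mathcal M)=\{\rhom(x):x\in\mathbb S^n\}$, condition~\eqref{eq:nepvnr} is equivalent to
\begin{equation*}
\min_{x\in\mathbb S^n} x^* H(x_\star)\,x \;=\; x_\star^* H(x_\star)\,x_\star.
\end{equation*}
By the Courant--Fischer characterization, the left-hand side equals $\lambda_{\min}(H(x_\star))$, so the equality holds iff $x_\star$ attains the minimum Rayleigh quotient of $H(x_\star)$, iff $H(x_\star)x_\star=\lambda x_\star$ with $\lambda=\lambda_{\min}(H(x_\star))$. Conversely, any $x_\star\in\mathbb S^n$ satisfying~\eqref{eq:nepv} yields $x_\star^*H(x_\star)x_\star=\lambda_{\min}(H(x_\star))=\min_{x\in\mathbb S^n}x^*H(x_\star)x$, and unwinding the identification produces~\eqref{eq:nepvnr} at $y_\star=\rhom(x_\star)$.

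The main obstacle, if any, is the bookkeeping check that $H_\star$ depends on the preimage $x_\star$ only through $y_\star=\rhom(x_\star)$, so that the NEPv formulation is unambiguous. This is immediate from the explicit formula for $\nabla g(y_\star)$ displayed above. Beyond that, the argument is a short chain of (i) first-order optimality on a convex set, (ii) bilinearity in $x^*\!\cdot x$ of each numerical-range coordinate, and (iii) the Rayleigh quotient characterization of the smallest eigenvalue.
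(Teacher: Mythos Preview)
Your proposal is correct and follows essentially the same approach as the paper: first derive the variational inequality $\nabla g(y_\star)^T(y-y_\star)\ge 0$ from convexity of $W(\mathcal M)$ (the paper does this via an explicit Taylor expansion along the segment $y(t)=y_\star+t(y-y_\star)$, you cite it as the standard first-order condition on a convex set), then compute $\nabla g$ explicitly, identify $\nabla g(y_\star)^T\rhom(x)=x^*H(x_\star)x$, and invoke the Rayleigh-quotient characterization of $\lambda_{\min}$. Your write-up is slightly more complete in that you spell out the converse direction of the ``if and only if'' explicitly, which the paper leaves implicit.
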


\begin{proof}
	We first show equation~\eqref{eq:nepvnr}.
	Let $y$ be an arbitrary point in $W(\mathcal M)$. 
	Recall that the JNR $W(\mathcal M)$ is a convex set 
	for matrices of size $n\geq 3$ (see, e.g.,~\cite{Au:1983}). 
	By convexity, $W(\mathcal M)$ contains the entire line segment
	between $y$ and $y_{\star}$.
	Namely,
	\[
	\text{$y(t):=y_{\star}+t(y-y_{\star})\in W(\mathcal M)$ for all $t\in[0,1]$.}
	\]
	Since $g$ is differentiable at $y_{\star}$, 
	by Taylor's expansion of $g(y)$ at $y_{\star}$, 
	\[
	g(y(t)) = g(y_{\star}) + t\cdot \nabla g(y_{\star})^T (y-y_{\star}) + \mathcal O(
	t^2\|y-y_{\star}\|^2).
	\]
	The local minimality of $y_{\star}$ implies $g(y(t))\geq g(y_{\star})$ for 
	$t>0$ sufficiently small,
	so it holds that $\nabla g(y_{\star})^T (y-y_{\star}) \geq 0$,
	i.e.,
	\[
	\nabla g(y_{\star})^Ty_{\star} \leq \nabla g(y_{\star})^T y.
	\]
	Since the inequality above holds for all $y\in W(\mathcal M)$, we can derive 
	\[
	\nabla g(y_{\star})^Ty_{\star}\leq  
	\min_{y\in W(\mathcal M)} \nabla g(y_{\star})^T y \leq
	\nabla g(y_{\star})^Ty_{\star},
	\]
	where the last inequality is by $y_{\star}\in W(\mathcal M)$.
	As equality must hold, we obtain~\eqref{eq:nepvnr}.

	Next we show $y_{\star}=\rhom(x_{\star})$ for some $x_{\star}\in\bbS^n$ 
	satisfying the NEPv~\eqref{eq:nepv}. 
	Since $y$ and $y_{\star}\in W(\mathcal M)$, 
	we can write $y=\rhom(x)$ and $y_{\star}=\rhom(x_{\star})$ for some
	$x,x_{\star}\in\bbS^n$, according to the definition in~\eqref{eq:jnr}.
    We can derive
    \begin{equation}\label{eq:gty}
    	\nabla g(y_{\star})^T y = 
    	\nabla g(\rhom(x_{\star}))^T\cdot \rhom(x) 
    	= x^*{H}(x_{\star}) x,
    \end{equation}
    where in the last equality we used the gradient 
	of the function $g$ defined in~\eqref{eq:gfun}: 
    \[
    \nabla g(y) = 
    \left[
    \frac{1}{\alpha_1+\beta_1y_3},\ 
    \frac{1}{\alpha_2+\beta_2y_3},\
    \sum_{i=1,2} -\frac{\beta_iy_i}{(\alpha_i+\beta_iy_3)^2},
    \right]^T.
    \]
	Using~\eqref{eq:gty}, we can write the characterization~\eqref{eq:nepvnr} as
    \[
    \min_{x\in\C^n,\ \|x\|=1} x^* H(x_{\star}) x =x_{\star}^*  H(x_{\star}) x_{\star}.
    \]
    According to the well-known minimization principle of the Hermitian eigenvalue
    problems, the minimal value of the quadratic form $x^* H(x_{\star}) x$
    from above is achieved at the eigenvector of the smallest eigenvalue of 
    $ H(x_{\star}) x =\lambda x$.
    Consequently, $x_{\star}$ is the eigenvector for the smallest eigenvalue of the
    Hermitian matrix $ H(x_{\star})$. We proved~\eqref{eq:nepv}.
\end{proof}


By~\Cref{thm:nepvnr}, the NEPv~\eqref{eq:nepv} and the equivalent variational
equation~\eqref{eq:nepvnr} provide local optimality condition for the JNR
minimization~\eqref{eq:optw}.
We note that those conditions are {\em stronger} than the standard 
first-order optimality condition for the original SRQ2
minimization~\eqref{eq:srq2min},
for which, by introducing the Lagrange function
$L(x,\lambda)=f(x) - \lambda(x^*x-1)$
and set $\nabla_x L=0$ 
(assuming $x\in\R^n$ is a real vector so that $L$ is differentiable),
we obtain the same equation
as~\eqref{eq:nepv} but do not require $\lambda$ to be the smallest
eigenvalue of $ H(x)$.
This advantage of NEPv characterization is partially explained by the relation
of the two optimization problems revealed in~\Cref{lem:relation}.

In the literature, NEPv characterizations have been explored in various
optimization problems with orthogonality constraints; 
see~\cite{Bai:2024} and references therein. 
Those characterizations allow for efficient solution of the optimization
problem by exploiting state-of-the-art eigensolvers.
Particularly, in~\cite{Zhang:2013,Zhang:2014} the authors
proposed NEPv characterizations that apply to the optimization 
of sum of Rayleigh quotients:
\begin{equation}\label{eq:srq2s}
	\min_{x\in\mathbb C^n,\ \|x\|=1}~
	f(x):= x^*A_1x + \frac{x^*A_2x}{x^*A_3x},
\end{equation}
where $A_i\in\C^{n,n}$ for $i=1,2,3$ are Hermitian
positive definite matrices\footnote{
	The works~\cite{Zhang:2013,Zhang:2014} focus on
	maximization of $f(x)$ rather than minimization, and they assume $A_3\succ 0$. 
	These problems are equivalent to~\eqref{eq:srq2s}
	by negating and shifting the objective function to $-f(x)+\sigma$, with $\sigma>0$.
	Additionally, they only consider real $A_i\in\R^{n,n}$ and $x\in\R^n$. 
	However, we can always represent the variable $x=a+b\imath\in\C^n$ by its real and imaginary parts 
	as $[a^T,b^T]^T\in\R^{2n}$, thereby converting~\eqref{eq:srq2s} into
	an equivalent real form.
}.
Our development of the NEPv differs from 
the previous works~\cite{Zhang:2013,Zhang:2014} in several aspects. 
First, the SRQ2 minimization~\eqref{eq:srq2min}
cannot generally be reduced to the form of~\eqref{eq:srq2s},
due to the presence of semidefinite matrices in both denominators of 
the objective function $f(x)$ in~\eqref{eq:srq2s}.
Therefore, the previous analyses~\cite{Zhang:2013,Zhang:2014}
do not directly apply to the SRQ2 minimization~\eqref{eq:srq2min}.
Second, the developments in~\cite{Zhang:2013,Zhang:2014} primarily rely on the 
KKT conditions and properties of the {\em global optimal} solution
of the optimization problem~\eqref{eq:srq2s}.
In contrast, by exploiting the JNR
minimization~\eqref{eq:optw},
our \Cref{thm:nepvnr} reveals the nature of NEPv~\eqref{eq:nepv} as the
first-order {\em local optimality} condition of the JNR
minimization~\eqref{eq:optw}.
Combined with~\Cref{lem:mapping}, this insight explains why 
NEPv~\eqref{eq:nepv} can characterize global optimality
but not local optimality of the minimization problem~\eqref{eq:srq2s},
advancing the previous observations in~\cite{Zhang:2013,Zhang:2014}.
Additionally, our use of the JNR significantly simplifies 
the derivation of NEPv for~\eqref{eq:srq2s}.
Particularly, by a straightforward extension, it provides a unified treatment 
for problem~\eqref{eq:srq2s} and its block extension given by:
Minimize $f(X):=\mbox{tr}(X^*A_1X) + \mbox{tr}(X^*A_2X) / \mbox{tr}(X^*A_3X)$ 
subject to $X\in\C^{n,k}$ and $X^*X=I_k$.
Similar trace minimizations were considered in~\cite{Zhang:2014}, 
but over the real Stiefel manifold with variables $X\in\R^{n,k}$ and $X^TX=I_k$.



%

\subsection{Non-differentiable Minimizers}\label{sec:nondiff}
\Cref{thm:nepvnr} requires the
local minimizer $y_{\star}$ to be differentiable for the 
rational function $g$, but this is not a significant restriction.
The non-differentiability issue can occur if one of the 
fractions in $g(y_{\star})$, as given by~\eqref{eq:gfun}, 
takes the form $\frac{0}{0}$.
We will show that such non-differentiable local minimizers can be 
quickly identified by Hermitian eigenvalue problems.

For simplicity, we assume the two positive semidefinite matrices 
in the denominators of the objective function $f(x)$ of the SRQ2
minimization~\eqref{eq:srq2min}, 
$(\alpha_i I+\beta_iA_3)$ for $i=1,2$,
do not share any common null vectors. 
Equivalently, this is expressed as
\begin{equation}\label{eq:comnull}
	(\alpha_1 I+\beta_1A_3) +(\alpha_2 I+\beta_2A_3) \succ 0,
\end{equation}
given that $(\alpha_i I + \beta_i A_3)\succeq 0$.
The condition~\eqref{eq:comnull} holds for all SRQ2 minimizations
from the backward error analysis of Rosenbrock systems discussed in~\Cref{preliminaries}.
Moreover, to determine the differentiability of $g(y)$,
we introduce the Hermitian matrices
\begin{equation}\label{eq:qi}
	Q_i := A_i+(\alpha_i I + \beta_i A_3),
	\text{ for $i=1,2$,}
\end{equation}
corresponding to each Rayleigh quotient in the 
function $f(x)$ given by~\eqref{eq:srq2min}. 
Recalling that $A_i\succeq 0$ and 
$(\alpha_i I + \beta_i A_3)\succeq 0$,
we have $Q_i\succeq 0$.
A quick verification shows that $Q_i\succ 0$  if and only if 
the Rayleigh quotient $\frac{x^*A_ix}{x^*(\alpha_i I+\beta_i A_3)x}$ won't encounter the form 
$\frac{0}{0}$.

\begin{theorem}\label{thm:nondiff}
	Let assumption~\eqref{eq:comnull}  hold for the 
	JNR minimization~\eqref{eq:optw}
	and 
	$Q_i$ be given by~\eqref{eq:qi}.
	Let $y_{\star}$ be a local minimizer of $g$ over 
	$W(\mathcal M)$ with $g(y_{\star})<\infty$.
	\begin{enumerate}[(a)]
		\item \label{thm:nondiff:ia}
		If $Q_1\succ 0$ and $Q_2\succ 0$, then 
		$y_{\star}$ must be a differentiable point for $g(y)$,
		and hence, it admits the NEPv characterization given by~\Cref{thm:nepvnr}.

		\item \label{thm:nondiff:ib}
		Otherwise, $y_{\star}$ may be non-differentiable for $g(y)$,
		and then it must be given by 
		$y_{\star}=\rhom(K_iv_i)$ for $i=1$ or $i=2$,
		where $K_i$ is an orthogonal basis matrix of the null space 
		of $Q_i$, and $v_i$ denotes an eigenvector for the smallest eigenvalue
		$\mu_i$ of the following Hermitian eigenvalue problems:
		\begin{subequations}\label{eq:nondiffeig}
		\begin{align}
			(K_1^*A_2K_1)\cdot v_1&=\mu_1 \cdot (\alpha_2I+\beta_2 K_1^*A_3K_1)\cdot v_1,  \label{eq:nondiffeig1}\\
			(K_2^*A_1K_2)\cdot v_2&=\mu_2 \cdot (\alpha_1I+\beta_1 K_2^*A_3K_2)\cdot v_2, \label{eq:nondiffeig2}
		\end{align}
		\end{subequations}
		where the two Hermitian matrices on the right-hand side 
		are positive definite.
	\end{enumerate}
\end{theorem}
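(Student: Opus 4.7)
My plan is to analyze the two cases based on the definiteness of $Q_1$ and $Q_2$ separately. The unifying principle is that non-differentiability of $g$ at $y_\star$ can only arise when one of the two fractions takes the $\frac{0}{0}$ form, which in turn forces $x_\star$ to lie in the null space of the corresponding $Q_i$. Positive definiteness of both $Q_i$ precisely precludes this, yielding Part~(a).

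For Part~(a), I would write $y_\star = \rhom(x_\star)$ for some $x_\star \in \bbS^n$ and argue by contradiction. Suppose $g$ is non-differentiable at $y_\star$, i.e., $\alpha_i + \beta_i y_{\star,3} = 0$ for some $i$. Since $\alpha_i I + \beta_i A_3 \succeq 0$, this forces $(\alpha_i I + \beta_i A_3) x_\star = 0$. Finiteness of $g(y_\star)$ together with the convention $\frac{0}{0} = 0$ in~\eqref{eq:defrq} then requires $y_{\star,i} = x_\star^* A_i x_\star = 0$, which gives $A_i x_\star = 0$ since $A_i \succeq 0$. Summing the two identities yields $Q_i x_\star = 0$, contradicting $Q_i \succ 0$. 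Hence both denominators are nonzero at $y_\star$, making $g$ differentiable there, and~\Cref{thm:nepvnr} supplies the NEPv characterization.

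For Part~(b), assume $y_\star$ is non-differentiable. The argument in Part~(a) still produces an index $i$ with $Q_i x_\star = 0$; say $i = 1$. Assumption~\eqref{eq:comnull} rules out simultaneously having $Q_1 x_\star = Q_2 x_\star = 0$, since that would force $(\alpha_1 I + \beta_1 A_3) x_\star = (\alpha_2 I + \beta_2 A_3) x_\star = 0$ and hence $x_\star = 0$. Thus $x_\star = K_1 v_1$ for a unit vector $v_1$, and restricting to $x = K_1 v$ yields $\rhom(K_1 v)$ with vanishing first coordinate, so the objective reduces to the generalized Rayleigh quotient appearing on the right-hand side of~\eqref{eq:nondiffeig1}. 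A short verification shows that $\alpha_2 I + \beta_2 K_1^* A_3 K_1 \succ 0$: any null vector $v$ of it would lift via $K_1 v$ to a nonzero common null vector of $\alpha_1 I + \beta_1 A_3$ and $\alpha_2 I + \beta_2 A_3$, contradicting~\eqref{eq:comnull}.

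The crux of the proof, and what I expect to be the main obstacle, is showing that $v_1$ must correspond to the \emph{smallest} generalized eigenvalue of the pencil in~\eqref{eq:nondiffeig1}, not merely some eigenvalue. Since $y_\star$ is a local minimizer of $g$ on $W(\mathcal M)$ and $\rhom$ is continuous, $v_1$ is necessarily a local minimizer of the restricted Rayleigh quotient on the unit sphere of the null space of $Q_1$. A standard result (obtained e.g.\ via the substitution $u = (\alpha_2 I + \beta_2 K_1^* A_3 K_1)^{1/2} v$ which reduces the problem to a standard Hermitian Rayleigh quotient) ensures that local minima of such a generalized Rayleigh quotient over the unit sphere coincide with eigenvectors of the smallest generalized eigenvalue. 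This yields~\eqref{eq:nondiffeig1}, and the case $i = 2$ follows by symmetry, giving~\eqref{eq:nondiffeig2}.
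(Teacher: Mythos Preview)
Your proposal is correct and follows essentially the same route as the paper: both arguments reduce non-differentiability to $x_\star\in\mathrm{null}(Q_i)$, invoke~\eqref{eq:comnull} to obtain positive definiteness of the restricted denominator, pass from local minimality of $y_\star$ to local minimality of $v_i$ for the restricted Rayleigh quotient via continuity of $\rhom$ (the paper phrases this as an appeal to~\Cref{lem:relation}), and then use that any local minimizer of a generalized Rayleigh quotient is global. The only cosmetic differences are that the paper cites~\cite[Prop.~4.6.2]{Absil:2009} for the last step whereas you spell out the square-root substitution, and your remark that $i=1$ and $i=2$ cannot occur simultaneously is an extra observation not needed for (or present in) the paper's argument.
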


\begin{proof}
By~\eqref{eq:gfun}, we can write the objective function as $g=g_1+g_2$ with 
\begin{equation}\label{eq:gi}
	g_i(y) := 
	\frac{y_i}{\alpha_i + \beta_i y_3}
	\equiv 
	\frac{x^*A_ix}{x^*(\alpha_iI + \beta_i A_3)x},
\end{equation}
for $i=1,2$, where we parameterize $y=\rhom(x)$
by an $x\in\bbS^n$.
Clearly, $g_i$ is differentiable at $y$
if and only if the denominator
$\alpha_i + \beta_i y_3\equiv x^*(\alpha_iI + \beta_i A_3)x\neq 0.$

For item~\eqref{thm:nondiff:ia}, 
by parameterizing $y_{\star}=\rhom(x_{\star})$,
we must have $x_{\star}^*(\alpha_iI + \beta_i A_3)x_{\star} \neq 0$
for $i=1,2$.
Because otherwise, we have $x_{\star}^*A_ix_{\star}\equiv x^*Q_ix>0$.
Then by~\eqref{eq:gi}, $g_i(y_{\star})= (x^*Q_ix)/0 =\infty$, 
contradicting $g(y_{\star}) < \infty$.
Hence, $g$ is differentiable at $y_{\star}$.

For item~\eqref{thm:nondiff:ib}, suppose that $g_1$ 
is not differentiable at $y_{\star}=\rhom(x_{\star})$,
i.e., its denominator in definition~\eqref{eq:gi} vanishes: 
$ \alpha_i + \beta_i y_3 \equiv x_{\star}^*(\alpha_1I + \beta_1
A_3)x_{\star}=0$.
Since $g_1(y_{\star})<\infty$, the numerator in $g_1(y_{\star})$ 
must also vanish: $y_i\equiv x_{\star}^*A_1x_{\star}= 0$. 
So, we have
\begin{equation}\label{eq:nullx}
x_{\star}\in 
\mbox{null}(A_1)\bigcap \mbox{null}(\alpha_1 I + \beta_1 A_3)
\equiv 
\mbox{null}(A_1+\alpha_1 I + \beta_1 A_3)
\equiv 
\mbox{range}(K_1), 
\end{equation}
where we used $A_1\succeq 0$ and $(\alpha_1 I + \beta_1 A_3)\succeq 0$
and $K_1$ is an orthogonal basis matrix.

Now, by the local minimality of $y_{\star}$ and~\Cref{lem:relation},
we have $x_{\star}$ must be a local minimizer of $f(x)$ 
with $x\in\bbS^n$ restricted to the subspace $\mbox{range}(K_1)$:
\begin{equation}\label{eq:localmin}
\min_{\substack{x=K_1v\\ v\in\bbS^p}} 
f(x)
=
\min_{\substack{x=K_1v\\ v\in\bbS^p}} 
\frac{x^*A_2x}{x^*(\alpha_2I_n+\beta_2 A_3)x}
= 
\min_{\substack{x=K_1v\\ v\in\bbS^p}} 
\frac{v^*(K_1^*A_2K_1)v}{v^*(\alpha_2I+\beta_2 K_1^*A_3K_1)v},
\end{equation}
where the first equality is due to $f(x)\equiv g_1(\rhom(x))+g_2(\rhom(x))$
and $g_1(\rhom(x)) = \frac{0}{0} \equiv 0$ 
for all $x\in \mbox{range}(K_1)$
(recalling~\eqref{eq:nullx} and our convention~\eqref{eq:defrq}).
Moreover, since the assumption~\eqref{eq:comnull}
implies that $(\alpha_iI_n+\beta_i A_3)$
for $i=1,2$ do not have a common null vector,
we have $(\alpha_2I+\beta_2 K_1^*A_3K_1)\succ 0$
for the matrix in the denominator of~\eqref{eq:localmin}.

As a well established result in matrix analysis 
and optimization, 
a local minimizer of a Rayleigh quotient must be its global 
minimizer; see, e.g.,~\cite[Prop 4.6.2]{Absil:2009}.
By~\eqref{eq:localmin} and Hermitian eigenvalue 
minimization principle, 
$x_{\star}=K_1v_1$, where
$v_1$ is the eigenvector for the smallest 
eigenvalue of the Hermitian eigenvalue problem~\eqref{eq:nondiffeig1}.

Analogously, if $g_2$ is not differentiable at $y_{\star}$, then we have
$y_{\star}=\rhom(K_2v_2)$, where $v_2$ is an eigenvector for the smallest 
eigenvalue of the eigenvalue problem~\eqref{eq:nondiffeig2}.
$\qed$
\end{proof}

In certain cases of partial block perturbations, as described 
in~\Cref{thm:2b,thm:3b},
the corresponding SRQ2 minimization may not have
positive definite $Q_1$ or $Q_2$ matrices.
To find the minimal solution of these problems, we can solve 
NEPv~\eqref{eq:nepv}, provided it has a solution,
along with the Hermitian eigenvalue 
problems~\eqref{eq:nondiffeig}. We can then 
select the $x_{\star}$ with the minimal object value $f(x_{\star})$ 
as the solution.

\section{Numerical Experiments}\label{sec:example}
We present numerical examples of SRQ2 minimization~\eqref{eq:srq2min}
for backward error analysis of the Rosenbrock system matrix.
Our primary goal is to demonstrate the benefits 
of the JNR reformulation~\eqref{eq:optw}, 
including elimination of certain local 
minimizers of~\eqref{eq:srq2min},
efficient solution via the NEPv characterization~\eqref{eq:nepv},
and visualization of the solution.
All experiments are implemented in MATLAB and run on a MacBook Pro laptop
with an M1 Pro CPU and 32G memory.
Codes and data are available through
\texttt{https://github.com/ddinglu/minsrq2}.

To solve NEPv~\eqref{eq:nepv}, we apply the 
level-shifted Self-Consistent-Field (SCF)
iteration~\cite{Bai:2024,Yang:2007,Zhang:2014a}:
Starting from an initial guess $x_0\in\bbS^n$, iteratively solve
Hermitian eigenvalue problems
\begin{equation}\label{eq:lsscf}
[H(x_k) + \sigma_k x_kx_k^* ] x_{k+1} = \lambda_{k+1} x_{k+1},
\end{equation}
for $k=0,1,\dots$, where $\sigma_k\in \mathbb R$ is a given level-shift, and $x_{k+1}$ is
the eigenvector corresponding to the smallest eigenvalue $\lambda_{k+1}$ of
the Hermitian matrix $H(x_k) + \sigma_k x_kx_k^*$.
If $\sigma_k\equiv 0$, then~\eqref{eq:lsscf} reduces to the plain SCF iteration.
But a non-zero level-shift $\sigma_k$ can help to stabilize and accelerate the
convergence of plain SCF. 
Particularly, a plain SCF may not always converge,
whereas for sufficiently large $\sigma_k$, level-shifted SCF is guaranteed
locally convergent under mild assumptions; see, e.g.,~\cite{Bai:2024}.
In our implementation, we select level-shifts $\sigma_k$ adaptively by sequentially trying 
\begin{equation}\label{eq:lss}
\sigma_k = 0,\, 2\delta_k,\, 2^2\delta_k,\, 2^3\delta_k,\dots,
\end{equation}
until a reduction in the objective function $f(x)$ or 
residual norm of NEPv is achieved by $x_{k+1}$.
Here, $\delta_k = \lambda_{2}(H(x_k)) - \lambda_{1}(H(x_k))$ is the 
eigenvalue gap between the first and second smallest eigenvalue of
$H(x_k)$,
and the use of $\sigma_k=2\delta_k$ is a common practice;
see, e.g.,~\cite{Yang:2007,Zhang:2014a}.
Often, a plain SCF ($\sigma_k=0$)
can produce a reduced $f(x_{k+1})$,
and then there is no need to 
enter the selection loop~\eqref{eq:lss} for $\sigma_k$.
Finally, the level-shifted SCF~\eqref{eq:lss} is considered to have
converged if the relative residual norm satisfies
\begin{equation}\label{eq:reltol}
\frac{\|H(x_k) x_k - s_k x_k\|}{\|H(x_k)\|_1 + 1}\leq 10^{-10}
\quad\text{with}\quad
s_k = x_k^*H(x_k) x_k,
\end{equation}
and $\|\cdot\|_1$ denotes the matrix 1-norm
(i.e., maximal absolute column sum).
We mention that the performance of SCF can be further enhanced by 
various acceleration techniques, such as damping, Anderson acceleration,
preconditioning, etc; see, e.g.,~\cite{Bai:2022} and references therein.
But those techniques are beyond the scope of this paper. 

To solve SRQ2 minimization~\eqref{eq:srq2min} directly,
we apply the Riemannian Trust-Region (RTR) method~\cite{Absil:2009}
-- 
a state-of-the-art algorithm for optimization problems with orthogonality
constraints and available in the software package \texttt{MANOPT}~\cite{Boumal:2014}.
The stopping criteria of RTR is based on the norm of 
the Riemannian gradient,
which coincides with twice the residual norm 
$\|H(x_k) x_k - s_k x_k\|$ with $s_k = x_k^*H(x_k) x_k$;
see, e.g.,~\cite{Absil:2009}.
For consistency with~\eqref{eq:reltol} in algorithm comparison, 
the error tolerance of RTR will be set as twice the residual 
norm of the last iteration by level-shifted SCF.

For demonstration, we compute the convex JNR $W(\mathcal M)\subset\R^3$
by sampling its boundary points.
A boundary point $y_v$ of $W(\mathcal M)$ 
with a given outer normal direction
$v\in\R^3$ is computed by $y_v=\rhom(x_v)$, where $x_v$ the eigenvector for 
the smallest eigenvalue of Hermitian eigenvalue problem
\begin{equation}\label{eq:supeig}
	H_vx=\mu x
	\qquad\text{and}\quad
	H_v=v_1A_1+v_2A_2+v_3A_3.
\end{equation}
We refer to~\cite{Bai:2024,Johnson:1978} for details on the computation of JNR.
To show a number $c$ is the global minimal value of $g$ over
$W(\mathcal M)$, 
we can check visually if the entire JNR $W(\mathcal M)$ would
lie on one side of the level surface $g(y)=c$.

\begin{example} \label{eg:jnr}
{\rm
	This example is to show that the JNR minimization~\eqref{eq:optw} 
	can avoid certain local 
	minimizers of the original SRQ2 minimization~\eqref{eq:srq2min}.
	To demonstrate,
	we consider an SRQ2 minimization of size $n=3$ in the following
	form 
	\begin{equation}\label{eq:eg1}
		\min \left\{f(x):=\frac{x^*A_1x}{x^*x} + \frac{x^*A_2x}{x^*A_3x} \colon x\in\C^n, \|x\|=1\right\},
	\end{equation}
	where the coefficient matrices are randomly generated and given by
	\[
	A_1=
	\left[\begin{array}{rrr}
		0.64 &  -0.15 &  -0.38\\
		-0.15 &   0.60 &  -0.22\\
		-0.38 &  -0.22 &   0.56
	\end{array}\right],
	\quad
	A_2=
	\left[\begin{array}{rrr}
		0.73 &    0.24 &   -0.07\\
		0.24 &    0.52 &   -0.04\\
		-0.07 &   -0.04 &    0.38
	\end{array}\right],
	\]
	and $A_3=\mbox{diag}( 0.53, 0.97, 0.38)$.
	Solving SRQ2 minimization~\eqref{eq:eg1} by the Riemannian trust-region
	method with various starting vectors, we found two optimal solutions 
	\begin{equation}\label{eq:eg1sol}
		x_{\star}^{(1)} = [-0.1728, -0.7704, -0.6137]^T
		\ \text{and}\ 
		x_{\star}^{(2)} = [-0.5730, 0.6282, -0.5263]^T.
	\end{equation}
	Both are local minimizers of SRQ2 minimization~\eqref{eq:eg1},
	since the Hessian matrices for $f(x)$ at $x_{\star}$
	can be verified positive definite over the complement of $x_{\star}$
	-- a sufficient condition for 
	local optimality of the constrained minimization~\eqref{eq:eg1};
	see, e.g.,~\cite{Nocedal:2006,Zhang:2013}.

	
	%
	By a change of variable $y=\rhom(x)$, the SRQ2 minimization~\eqref{eq:eg1} 
	can be written as the following JNR minimization problem
	\begin{equation}\label{eq:eg1b}
		\min \left\{ g(y):=y_1 + \frac{y_2}{y_3} \colon y\in W(\mathcal M)
		\right\},
	\end{equation}
	where $\mathcal M = (A_1,A_2,A_3)$.
	Here, we recall~\eqref{eq:jnr} for the definition of the joint numerical range $W(\mathcal M)$.
	The local minimizers $x_{\star}$ 
	from~\eqref{eq:eg1sol} yield $y_{\star}=\rhom(x_{\star})$ as given by 
	\begin{equation}\label{eq:ystar}
		y_{\star}^{(1)} = [0.2575,    0.4848,    0.7346]^T
		\quad\text{and}\quad
		y_{\star}^{(2)} = [0.6263,    0.3616,    0.6621]^T.
	\end{equation}
	Those optimal solutions are depicted in \Cref{fig:localmin},
	thanks to that the JNR minimization~\eqref{eq:eg1b} is over 
    $\R^3$, which allows for convenient visualization.
	The joint numerical range $W(\mathcal M)$ lies
	on one side of the level surface $g(y) = g(y_{\star}^{(1)})$,
	indicating $y_{\star}^{(1)}$ is (at least) a local minimizer of JNR
	minimization~\eqref{eq:eg1b}.
	Whereas the level surface $g(y) = g(y_{\star}^{(2)})$ cuts through
	$W(\mathcal M)$,
	implying $y_{\star}^{(2)}$ is not a local minimizer.
	That is to say, 
	despite both $x_{\star}^{(1)}$ and $x_{\star}^{(2)}$ are local minimizers of SRQ2
	minimization~\eqref{eq:eg1},
	only $y_{\star}^{(1)}=\rhom(x_{\star}^{(1)})$ occurs as a local minimizer of JNR minimization~\eqref{eq:eg1b}.

	\begin{figure}
		\centering
		\includegraphics[width=0.45\textwidth]{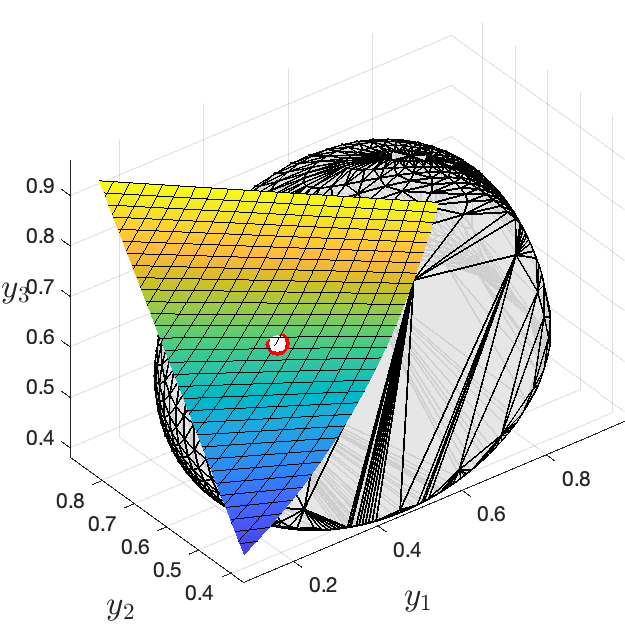}~\includegraphics[width=0.45\textwidth]{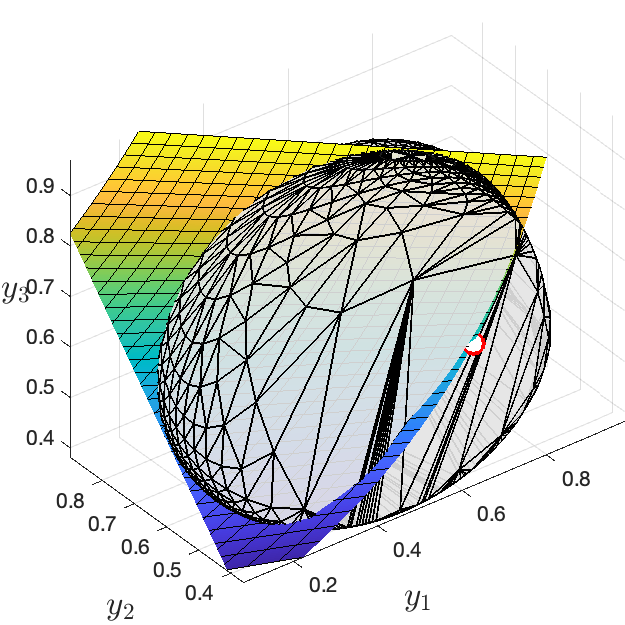}
		\caption{
			Joint numerical range $W(\mathcal M)$
			(bounded by gray surface, based on $N=800$ sample boundary points)
			and level surface $\{y\colon g(y) = g(y_{\star})\}$ 
			(colored) at solution $y_{\star}$ (marked `o')
			of optimization problem~\eqref{eq:eg1b}.
			The left plot is with $y_{\star}=y_{\star}^{(1)}$ and the right with
			$y_{\star}=y_{\star}^{(2)}$, both from~\eqref{eq:ystar}.
		} \label{fig:localmin}
	\end{figure}

	By~\Cref{thm:nepvnr}, since $y_{\star}^{(1)}=\rhom(x_{\star}^{(1)})$
	is a local minimizer of JNR minimization~\eqref{eq:eg1b},
	the corresponding $x_{\star}^{(1)}$ must be a solution to the
	NEPv~\eqref{eq:nepv}.  
	Numerically, 
	$$\|H(x_{\star}^{(1)})x_{\star}^{(1)} - \lambda_{\min}(H(x_{\star}^{(1)}))\cdot x_{\star}^{(1)}\| \approx 1.0\times 10^{-10},$$
	where $\lambda_{\min}$ is the smallest eigenvalue of $H(x_{*1})$, indicating 
	$x_{\star}^{(1)}$ is an approximate eigenvector of NEPv~\eqref{eq:nepv}.
	In contrast,  we have for the other solution $x_{\star}^{(2)}$ that 
	$$\|H(x_{\star}^{(2)})x_{\star}^{(2)} - \lambda_{\min}(H(x_{\star}^{(2)}))\cdot x_{\star}^{(2)}\| \approx 2.0\times 10^{-1}, $$
	indicating $x_{\star}^{(2)}$ is not an eigenvector of NEPv~\eqref{eq:nepv}.
	This is consistent to the fact that $y_{*2}$ is not a local minimizer
	of the JNR minimization~\eqref{eq:eg1b}.
	Indeed, we have $\|H(x_{\star}^{(2)})x_{\star}^{(2)} - \lambda_{2}(H(x_{\star}^{(2)})) \cdot x_{\star}^{(2)}\|
	\approx 10^{-10}$; i.e., $x_{\star}^{(2)}$ is with the
	second smallest eigenvalue $\lambda_2$ of $H(x_{\star}^{(2)})$,
	rather than the smallest one as required by NEPv~\eqref{eq:nepv}.
	Hence, solving NEPv~\eqref{eq:nepv} can avoid the local minimizer $x_{\star}^{(2)}$
	of SRQ2 minimization~\eqref{eq:eg1}.

	%
	Finally, it can be verified that the  approximate joint numerical range in \Cref{fig:localmin} 
	lies {\em entirely} on
	one side of the level surface,
	justifying pictorially
	that the solution $y_{\star}^{(1)}$ is a global minimizer of~\eqref{eq:eg1b}.
	Here, the approximate $W(\mathcal M)$ is
	constructed by the convex hull of $N=800$ sampled boundary points of $W(\mathcal M)$
	with uniformly distributed normal directions.
	Each boundary point is computed by 
	solving a Hermitian eigenvalue problems of size $n=3$
	as given by~\eqref{eq:supeig}.
	See~\cite{Bai:2024} for details of the computation.
} %
\end{example}

%
\begin{example}\label{eg:linsys}
{\rm
	For some specialized SRQ2 minimization in the form of~\eqref{eq:srq2s},
	it is known that NEPv characterization, as the one given by~\eqref{eq:nepv},
	allows for the use of state-of-the-art eigensolvers for fast solution of the
	problem;  see, e.g.,~\cite{Zhang:2013,Zhang:2014a}.
	In this example, we consider SRQ2 minimization~\eqref{eq:srq2min}
	intended for computing eigenvalue backward error of the Rosenbrock systems,
	and demonstrate the performance of NEPv approaches.
	For experiment, we consider a Rosenbrock system matrix with a 
	linear $P(z) = A_0+A_1z$, as given by
	\begin{equation}\label{eq:rosenbrocklin}
		S(z) = \begin{bmatrix} A-z I_r & B \\ C & A_0+A_1z \end{bmatrix},
	\end{equation}
	where the coefficient matrices $\{A,B,C, A_0,A_1\}$ are randomly generated
	(using the MATLAB code \texttt{M = randn(m,n)+1i*randn(m,n)})
	and the dimensions are set to $n=100$ and $r=10$. 
	We compute the backward error $\eta(\lambda)$ with full
	perturbation using the formula~\eqref{eq:allbloexp}.
	For testing, we generate the approximate eigenvalue $\lambda$ 
	by first perturbing the coefficient matrices of~\eqref{eq:rosenbrocklin} 
	entrywisely by \texttt{randn*1.0E-1},
	and then choosing $\lambda$ as an eigenvalue of the perturbed linear system.
	We thus anticipate the backward error of such $\lambda$ to be at the same 
	order of the perturbation $\mathcal O(10^{-1})$.

	\begin{figure}
		\centering
		\includegraphics[width=0.48\textwidth]{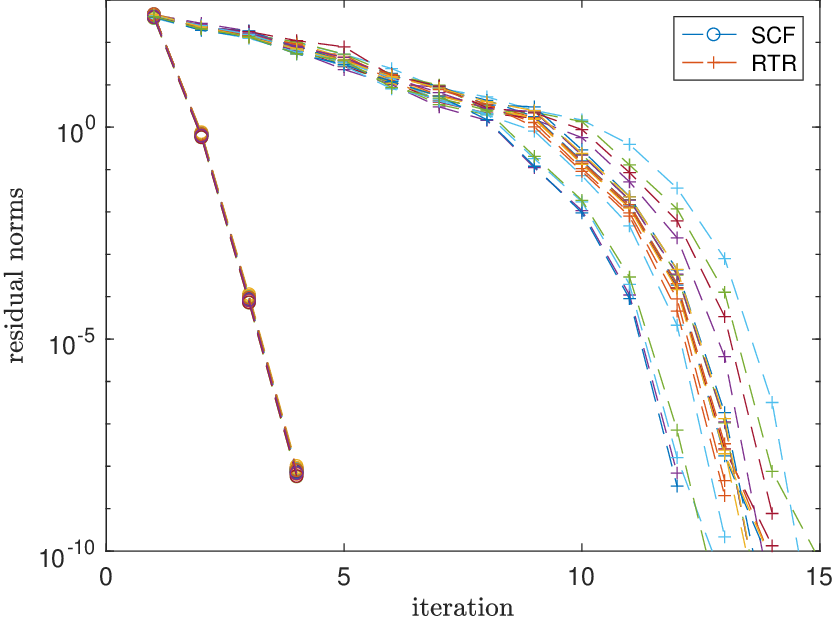}~\includegraphics[width=0.48\textwidth]{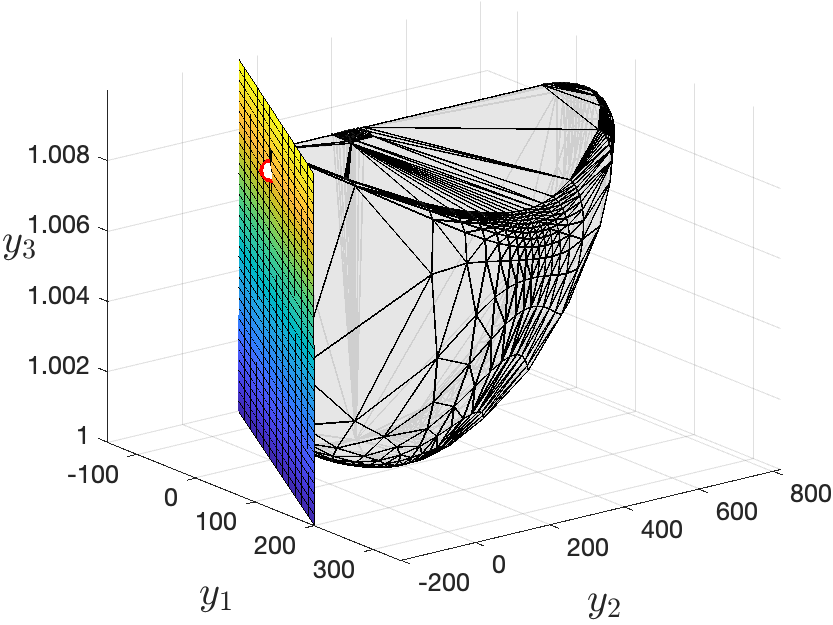}
		\caption{
			Rosenbrock system~\eqref{eq:rosenbrocklin} with $n=100$ and $r=10$
			to compute the backward error $\eta(\lambda)$ by~\eqref{eq:allbloexp}.
			{\bf Left}: Convergence history of residual norms 
			$\| H(x_k) x_k - s_k x_k\|_2$ with $s_k = x_k^*H(x_k)x_k$ for
			iterative $x_k$ by level-shifted SCF~\eqref{eq:lsscf} and Riemannian Trust-Region method. 
			Reported are $20$ repeated runs with randomly generated starting vectors $x_0$. 
			{\bf Right}: Illustration of the corresponding JNR minimization~\eqref{eq:optw},
			with approximate joint numerical range $W(\mathcal M)$
			(gray surface) and level surface of $g(y):=y_1 + y_2/y_3$ (color surface) 
			at the solution $y_{\star}=\rhom(x_{\star})$ (marked `o').
		} \label{fig:backerr}
	\end{figure}

	Recall that the formula of the backward error in~\eqref{eq:allbloexp} involves an SRQ2
	minimization in the form of~\eqref{eq:srq2s}
	-- with coefficient matrices $A_1=G_1$, $A_2=G_2$, and $A_3=(H_1+\gamma
	H_2)$ from~\eqref{eq:gmat}.
	The left panel of~\Cref{fig:backerr} reports the convergence history of 
	the Riemannian trust-region method for solving this optimization,
	as well as the level-shifted SCF~\eqref{eq:lsscf} for the corresponding
	NEPv~\eqref{eq:nepv}.
	We have repeatedly run the algorithms $20$ times with different and 
	randomly generated starting vectors $x_0$. 
	In all testing cases, SCF rapidly converged in three iterations,
	despite of a seemingly linear rate of convergence, 
	and the algorithm is insensitive to the choice of initial vectors.
	In comparison, the RTR requires more iterations
	-- although its local convergence rate seems superlinear,
	the initial convergence is relatively slow. 
	We note that the total number of iterations by RTR is about $4$ times that of
	SCF, but SCF is about $10$ times as fast in computation time:
	\[
	\mbox{SCF = $0.05$ seconds v.s. RTR = $0.57$ seconds}.
	\]
	This indicates the average cost per iteration of RTR is more expensive than
	that of SCF.
	The reason is mainly because RTR required a large number of {\em inner
	iterations} to solve the trust-region subproblem at each (outer)
	iteration~\cite{Absil:2009}.
	Taking inner iterations into account, RTR requires 
	an average of 176 iterations,
	and level-shifted SCF still requires $3$ 
	(i.e., $\sigma_k=0$ are always accepted and the process is indeed a plain SCF).

	For all the $20$ runs with different starting vectors,
	both RTR and SCF have converged to the same solution,
	yielding the backward error 
	$\eta(\lambda) \approx   0.0783127\dots,$
	which is indeed at the same order of the perturbation $\mathcal O(10^{-1})$.
	The global optimality of the computed solution is verified pictorially
	in the right panel of~\Cref{fig:backerr} through the corresponding 
	JNR minimization in the form of~\eqref{eq:eg1b}.
} %
\end{example}

\begin{example}\label{eg:blocks}
{\rm 
	This example is to demonstrate the computational efficiency of NEPv approaches
	for various SRQ2 minimization~\eqref{eq:srq2min} from~\Cref{preliminaries}
	for the computation of backward error of Rosenbrock systems with different types of block perturbations.
	We will focus on $7$ types of block perturbation, including 
	`AP', `BC', `ABC', `ABP', `ACP', `BCP', and `ABCP',
	where the letters indicate the perturbing blocks in the Rosenbrock
	system matrix~\eqref{eq:rosenbrock}.
	The formulas of backward error for each of those perturbation type 
	are found in~\Cref{preliminaries}, and they all involve SRQ2
	minimization~\eqref{eq:srq2min}.
    Note that we have discussed a total of $15$ cases of partial block perturbations
    in~\Cref{preliminaries}, among which $8$ cases can be conveniently solved by
    Hermitian eigenvalue problems and, hence, are not reported in our experiments.

	\begin{table}
		\begin{center}
		\caption{
			\Cref{eg:blocks}: Rosenbrock system matrix~\eqref{eq:rosenbrocklin} of
			dimensions $r=10$ and $n=200,400,600,800$. 
		}
		\label{tab:egn}
		\small\addtolength{\tabcolsep}{-3pt}
		\begin{tabular}{cc|rrrrrrr} \toprule 
			&     & \multicolumn{7}{c}{ `timing (\# iteration)' per type of block perturbation}\\ 
			$n$ & Alg & AP~~~ &BC~~~ &ABC~~& ABP~~& ACP~~ & BCP~~ & ABCP~~ \\ \midrule 
			\multirow{2}{*}{$200$}& SCF  &  0.4 ( 7)&  1.1 (48) &  0.8 (38) & 0.2 ( 6) &  0.1 ( 5) & 0.1 ( 5) & 0.1 ( 3)\\ 
			& RTR  &  4.2 (43)&  3.3 (19) &  3.8 (26) & 1.8 (14) &  6.3 (46) & 1.9 (14) & 1.8 (14)\\ \hline
			\multirow{2}{*}{$400$}& SCF  &  1.7 ( 6)&  1.0 (14) &  1.0 (14) & 0.3 ( 4) &  0.2 ( 3) & 0.5 ( 3) & 0.2 ( 3)\\ 
			& RTR  & 22.0 (43)& 10.7 (26) & 10.3 (35) & 5.5 (15) & 18.6 (39) & 7.0 (15) &33.5 (29)\\ \hline
			\multirow{2}{*}{$600$}& SCF  &  4.9 ( 5)&  2.0 (12) &  2.2 (13) & 0.9 ( 4) &  0.5 ( 3) & 0.5 ( 3) & 0.4 ( 2)\\ 
			& RTR  & 52.2 (80)& 38.3 (24) & 29.0 (31) &24.8 (19) & 88.1 (70) &28.9 (17) &25.5 (18)\\ \hline
			\multirow{2}{*}{$800$}& SCF  & 18.7 ( 5)&  4.4 (14) &  4.7 (15) & 1.2 ( 4) &  1.2 ( 3) & 0.9 ( 3) & 0.6 ( 2)\\ 
			& RTR  &177.9 (76)& 61.1 (22) & 63.4 (37) &44.4 (17) &118.8 (62) &52.9 (16) &43.6 (17)\\ \hline
		\end{tabular}
		\end{center}
	\end{table}

	\begin{table}
		\begin{center}
		\caption{
			\Cref{eg:blocks}: Rosenbrock system matrix~\eqref{eq:rosenbrocklin} of
			dimensions $n=10$ and $r=200,400,600,800$. 
		}
		\label{tab:egr}
		\small\addtolength{\tabcolsep}{-3pt}
		\begin{tabular}{cc|rrrrrrr} \toprule 
			&     & \multicolumn{7}{c}{ `timing (\# iteration)' per type of block perturbation}\\ 
			$r$ & Alg & AP~~~ &BC~~~ &ABC~~  & ABP~~& ACP~~ & BCP~~ & ABCP~~ \\ \midrule 
			\multirow{2}{*}{$200$}& SCF	 &  0.9 ( 9)&  0.2 (12) &  0.2 ( 5) & 0.9 ( 8) &  0.1 ( 6) & 0.2 (12) & 0.1 ( 5)	\\ 
			& RTR	 &  3.5 (14)&  2.7 (12) &  1.8 (11) & 2.8 (17) &  2.3 (11) & 2.7 (13) & 1.8 (11)\\ \hline
			\multirow{2}{*}{$400$}& SCF	 &  8.5 ( 9)&  0.7 ( 9) &  0.3 ( 4) & 0.6 ( 7) &  0.4 ( 5) & 0.7 ( 9) & 0.3 ( 4)	 \\ 
			& RTR	 & 19.5 (21)& 15.9 (14) &  9.1 (12) &23.5 (25) & 10.9 (12) &12.9 (13) & 9.2 (12)\\ \hline
			\multirow{2}{*}{$600$}& SCF	 & 18.6 ( 8)&  1.3 ( 7) &  0.9 ( 3) &16.2 ( 7) &  1.2 ( 5) & 1.8 ( 8) & 0.8 ( 4)	 \\ 
			& RTR	 & 40.7 (17)& 30.8 (12) & 16.7 (10) &27.0 (16) & 18.7 (10) &34.1 (12) &15.7 (10)\\ \hline
			\multirow{2}{*}{$800$}& SCF	 & 41.5 ( 7)&  2.5 ( 8) &  0.9 ( 3) &31.7 ( 6) &  1.9 ( 4) & 2.8 ( 9) & 1.6 ( 4)	 \\ 
			& RTR	 & 77.8 (22)& 59.1 (12) & 36.5 (12) &57.0 (21) & 33.9 (11) &68.0 (13) &35.3 (12)\\ \hline
		\end{tabular}
		\end{center}
	\end{table}
	%

	\begin{table}
		\begin{center}
		\caption{
			\Cref{eg:blocks}: Rosenbrock system matrix~\eqref{eq:rosenbrocklin} of
			dimensions $r=n$ and $n=200,400,600,800$. 
		}
		\label{tab:egnr}
		\small\addtolength{\tabcolsep}{-4pt}
		\begin{tabular}{cc|rrrrrrr} \toprule 
			&     & \multicolumn{7}{c}{ `timing (\# iteration)' per type of block perturbation}\\ 
			$n$ &Alg & AP~~~ &BC~~~ &ABC~~  & ABP~~& ACP~~ & BCP~~ & ABCP~~ \\ \midrule 
			\multirow{2}{*}{$200$}& SCF	 &  0.8 ( 8)&  0.8 ( 8) &  0.5 ( 7) &  0.5 ( 7) &  0.3 ( 5) &  0.3 ( 5) &  0.2 ( 3)	\\ 
			& RTR	 &  6.1 (14)&  8.2 (14) &  9.4 (15) &  7.3 (15) &  9.1 (15) &  8.5 (14) &  6.0 (14)\\ \hline
			\multirow{2}{*}{$400$}& SCF	 &  2.5 ( 8)&  2.5 ( 8) &  2.4 ( 7) &  2.9 ( 7) &  1.6 ( 5) &  1.6 ( 5) &  0.9 ( 3)	 \\ 
			& RTR	 & 40.3 (15)& 49.7 (16) & 41.0 (16) & 45.8 (16) & 59.5 (16) & 57.0 (16) & 41.5 (16)\\ \hline
			\multirow{2}{*}{$600$}& SCF	 &  7.2 ( 8)&  6.9 ( 8) &  6.3 ( 7) &  6.2 ( 7) &  4.0 ( 5) &  4.0 ( 5) &  3.5 ( 4)	 \\ 
			& RTR	 &182.0 (18)&167.4 (18) &167.7 (18) &136.5 (17) &224.3 (18) &131.0 (17) &130.7 (17)\\ \hline
			\multirow{2}{*}{$800$}& SCF	 & 12.8 ( 8)& 14.9 ( 8) & 11.4 ( 7) & 11.2 ( 7) &  8.3 ( 5) &  8.0 ( 5) &  5.6 ( 3)	 \\ 
			& RTR	 &284.4 (16)&396.5 (20) &293.8 (18) &263.9 (17) &383.5 (17) &429.5 (19) &217.8 (16)\\ \hline
		\end{tabular}
		\end{center}
	\end{table}
	%
	%
	
	For experiment, we still use the Rosenbrock system matrix~\eqref{eq:rosenbrocklin}
	from~\Cref{eg:linsys}.
	We generate such $S(z)$ with various dimensions $r$ and $n$ 
	and compute the eigenvalue backward errors for different 
	types of block perturbations via SRQ2 minimization~\eqref{eq:srq2min}.
	It is verified that condition~\eqref{eq:comnull} holds for all the 
	SRQ2 minimizations, so their optimal solutions are characterized by
	the NEPv~\eqref{eq:nepv} due to~\Cref{thm:nondiff}.
	In the test, we compare the performance of 
	Riemannian trust-region method for directly solving SRQ2 minimization~\eqref{eq:srq2min}
	and the level-shifted SCF for the corresponding NEPv~\eqref{eq:nepv}.
	The starting vectors of the algorithms are set to be the minimizer 
	of an individual Rayleigh quotient in~\eqref{eq:srq2min}, whichever bears a smaller value.  
	In all testing cases, the two algorithms have converged to the same solution. 
	\Cref{tab:egn,tab:egr,tab:egnr} report the computation time in seconds and number of
	iterations (as marked in parentheses) of the algorithms.
	We can see that the NEPv approach is always faster,
	and its save in computation time is quite remarkable as the dimension of problems increase.
	In a few cases, despite SCF takes more iterations than RTR,
	it can still run faster -- as we have also observed previously
	in~\Cref{eg:linsys}.
	This again indicates that each SCF step is cheaper than RTR's,
	thanks to the use of state-of-the-art eigensolvers by SCF.
} %
\end{example}

\section{Concluding Remarks}\label{sec:conclusion}
We have derived computable formulas for the structured eigenvalue backward error
of the Rosenbrock system matrix, considering both full and partial block
perturbations. 
These formulas are unified under a class of SRQ2 minimization problems, which
involve minimizing the sum of two generalized Rayleigh quotients. 
We demonstrated that these optimization problems can be reformulated
to the optimization of a rational function over the joint numerical range of
three Hermitian matrices. 
This reformulation helps to avoid certain local minima in the original problem
and to visualize the optimal solution.
Additionally, by exploiting the convexity in the joint numerical range, we
established an NEPv characterization for the optimal solution.
The effectiveness of our NEPv approach was also illustrated through numerical
examples.

\bibliographystyle{siam}
\bibliography{PraS22b.bib,refsDL.bib}

\end{document}